\documentclass{article}

\usepackage{amsmath}
\usepackage[utf8]{inputenc}
\usepackage[T1]{fontenc}
\usepackage{ifthen}
\usepackage[colorlinks,bookmarks]{hyperref}
\usepackage{cleveref}



\usepackage{microtype}
\usepackage[T1]{fontenc}
\usepackage[utf8]{inputenc}

\usepackage{mathtools}
\usepackage{amssymb}

\usepackage{tikz}
\usetikzlibrary{shapes.geometric, arrows.meta, positioning, calc}

\usepackage[margin=1in]{geometry}
\usepackage{booktabs}
\usepackage{longtable}
\usepackage{xcolor}
\usepackage{natbib}
\hypersetup{colorlinks=true, linkcolor=blue!50!black,
citecolor=blue!50!black, urlcolor=blue!50!black}
\usepackage{multirow}

\usepackage{amsthm}

\newtheorem{lemma}{Lemma}
\newtheorem{proposition}{Proposition}

\theoremstyle{definition}
\newtheorem{definition}{Definition}
\newtheorem{remark}{Remark}

\newcommand{\bbR}{\mathbb{R}}

\newcommand{\refl}{R}             
\newcommand{\illum}{I}            
\newcommand{\cct}{T}              
\newcommand{\sectionspace}{E}     
\newcommand{\imagespace}{B}       
\newcommand{\proj}{\pi}           
\newcommand{\mapsec}{g}           
\newcommand{\ambig}{\Sigma}       

\begin{document}

\title{Discontinuous Prior-Mode Sections and the Geometry of Ambiguity in Intrinsic Image Decomposition}

\author{
  Ziheng Chen \footnote{stokes615@utexas.edu},
  Liangchen Liu \footnote{lcliu@utexas.edu},
  Qishi Zhan \footnote{qishi.zhan@marquette.edu},
  Minxuan Hu \footnote{mh2229@cornell.edu},
  Liang Geng \footnote{lg838@scarletmail.rutgers.edu}
}

\maketitle

\begin{abstract}
  The viral 2015 photograph known as \emph{The Dress} divides observers into
  two camps because it is ambiguous: the same image colors can be explained either as a blue--black surface under one illuminant or as a white--gold surface under another.
  We propose a geometric account in which the ambiguity arises from a \emph{singularity} in intrinsic image decomposition, the inverse problem of separating an observed image into reflectance and illumination. Our central claim is that the prior-mode section---the prior-preferred decomposition---switches across an ambiguity boundary in image space, and that any smooth learned model can only approximate this discontinuous switch by forming a thin transition layer.
  This predicts two observable signatures, where $\Delta$ is the jump between branches of the prior-mode section and $\lambda$ is the regularization strength: for inverse decomposers, an albedo Jacobian scaling as $|\Delta|/\sqrt{\lambda}$; and for forward encoders, the Fernet curvature that blows up on a scale of $1/\sqrt{\lambda}$.
  On CGIntrinsics ($N=1998$ images, $n=2\times 10^7$ pixels), the color-temperature albedo Jacobian of Careaga DPT has partial Spearman correlation $r=0.41$ with dense ground-truth albedo error, compared with $r=0.087$ and $r=0.021$ for brightness and saturation controls.
  On \emph{The Dress}, CLIP ViT-L/14 exhibits a latent curvature peak of $\kappa=73.03$ at $6473\,\mathrm{K}$, one sampled step from D65 daylight, while a control dress image peaks at $\kappa=34.75$ with no comparable feature near D65. The same characteristic appears across architectures (U-Net inverter,
  diffusion inverter, ViT encoder) and datasets (rendered indoor scenes,
  web photograph), each measured with the observable appropriate to its
  model class.
\end{abstract}

\textbf{Keywords:}  Intrinsic Image Decomposition; Section Singularity; Prior-mode Section; The Dress

\section{Introduction}
\label{sec:intro}

The 2015 photograph known as \emph{The Dress} divides observers into two
camps of roughly equal size: one perceives the garment as blue-and-black,
the other as white-and-gold~\citep{lafer2015striking,cacoub2011dress,witzel2017most}.
This split is not measurement noise: both readings are physically
consistent, each assuming a different color temperature for the ambient
light and yielding a reflectance map that re-renders to the same pixels.
Existing Bayesian accounts~\citep{witzel2017most} locate the ambiguity in
a bimodal posterior over illuminant priors. We adopt a complementary,
sharper view: the relevant object is not the bimodality of the posterior
but the \emph{discontinuity of its mode}.

\begin{figure}[ht]
  \centering
  \includegraphics[width=\linewidth]{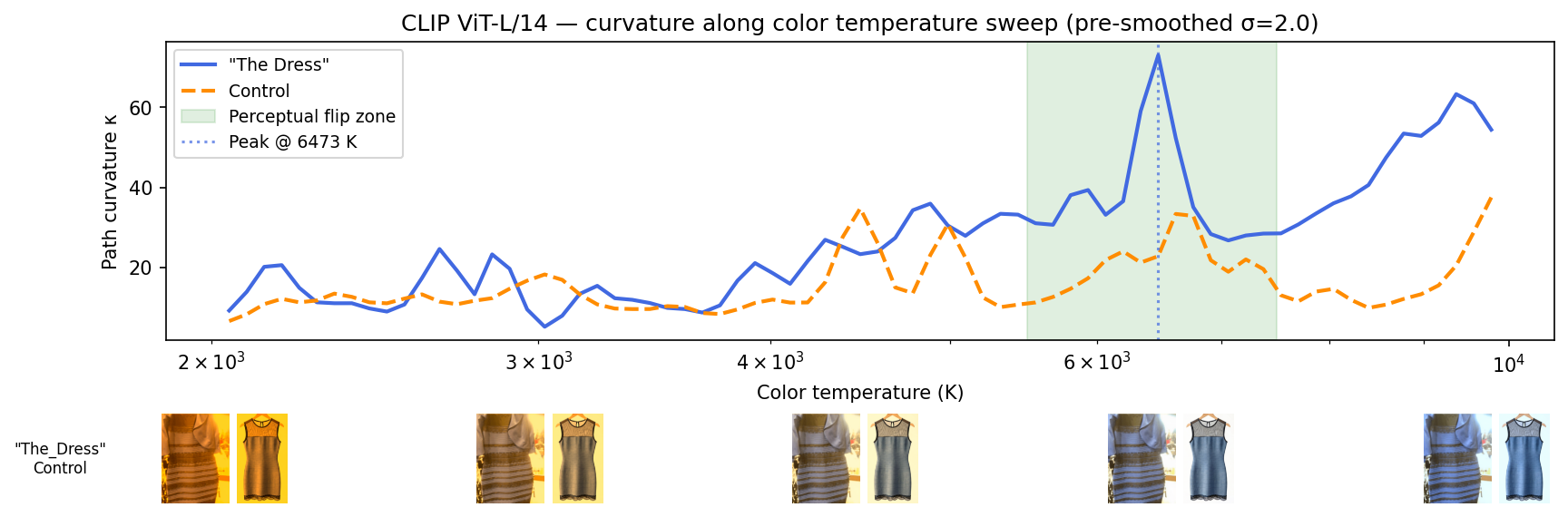}
  \caption{%
    \emph{The Dress} in CLIP: curvature peak at D65. Top: Discrete Frenet-Serret curvature of The Dress and the control image traced in CLIP latent space. The ambiguous dress exhibits a sharp peak at $\cct = 6473\,\mathrm{K}$, close to D65 ($6504\,\mathrm{K}$); the control dress shows several peaks of smaller magnitude.
    Bottom: The ambiguous and control image under perturbation across color temperatures. The second one from the right corresponds to the version under normal lighting conditions.%
  }
  \label{fig:dress_curvature}
\end{figure}

We reframe the perception of The Dress as an intrinsic decomposition task
and, simultaneously, an approximation problem in the language of
differential topology. Let $\proj\colon \sectionspace \to \imagespace$
denote the rendering map, the pixel-wise product taking
(reflectance, illumination) pairs to images; it is
non-injective, since all equal-product pairs render to the same pixel. To
fix a percept, the visual system effectively selects a single point on the
fiber $\proj^{-1}(c)$ as the mode of a prior $p$ over $(\refl,\illum)$; we
call this map the \emph{prior-mode section} $\mapsec(c)$
(\cref{def:section}).
We argue that this singular behavior reflects $\mapsec$ being single-valued and smooth on an open dense
subset of $\imagespace$ but multi-valued on a codimension-one
\emph{ambiguity locus} $\ambig \subset \imagespace$. Generic images sit off $\ambig$ and admit an unambiguous prior-mode
decomposition; The Dress sits over $\ambig$ with two perceptual camps
matching branches of $\mapsec$ on either side. The same
statement applies to any learned model that has internalized a prior,
including encoders that never train on $(\refl,\illum)$.

The singularity of the prior-mode section is inherited by any learned
inverse $f_\lambda$ that approximates $\mapsec$. An inverse trained with a
smoothness penalty of strength $\lambda$ matches $\mapsec$ off $\ambig$ but
pays a uniform price across it; this cost leaves two model-agnostic signatures. For the
one-dimensional Tikhonov model problem $L(f) = \int (f - \mapsec)^2 +\lambda\, |f'|^2$, the minimizer satisfies
\[
  \textbf{Albedo Jacobian:}\ |D f_\lambda(c_0)| \;\sim\; \frac{|\Delta \mapsec|}{\sqrt{\lambda}},
  \qquad
  \textbf{Frenet Curvature:}\ \kappa_{\gamma_\lambda}(0) \;\sim\; \frac{|\Delta \mapsec|}{\sqrt{\lambda}}
\]
across any jump $c_0$ of $\mapsec$ as $\lambda \to 0$. The $1/\sqrt{\lambda}$ exponent is inherited by deep
networks through a Gaussian-prior Laplace bridge (\cref{sec:theory}; full
derivation in Appendix~B). The Jacobian observable applies to the
decomposition setting (\cref{sec:decomp}); the curvature observable
applies to the encoder setting (\cref{sec:dress}).
\Cref{fig:global_section} provides a schematic illustration of the
sectional geometry and the resulting ill-posedness of the Jacobian.

\begin{figure}[ht]
  \centering
  \includegraphics[trim=.6cm 2.9cm .6cm 1.2cm,clip,width=0.7\textwidth]{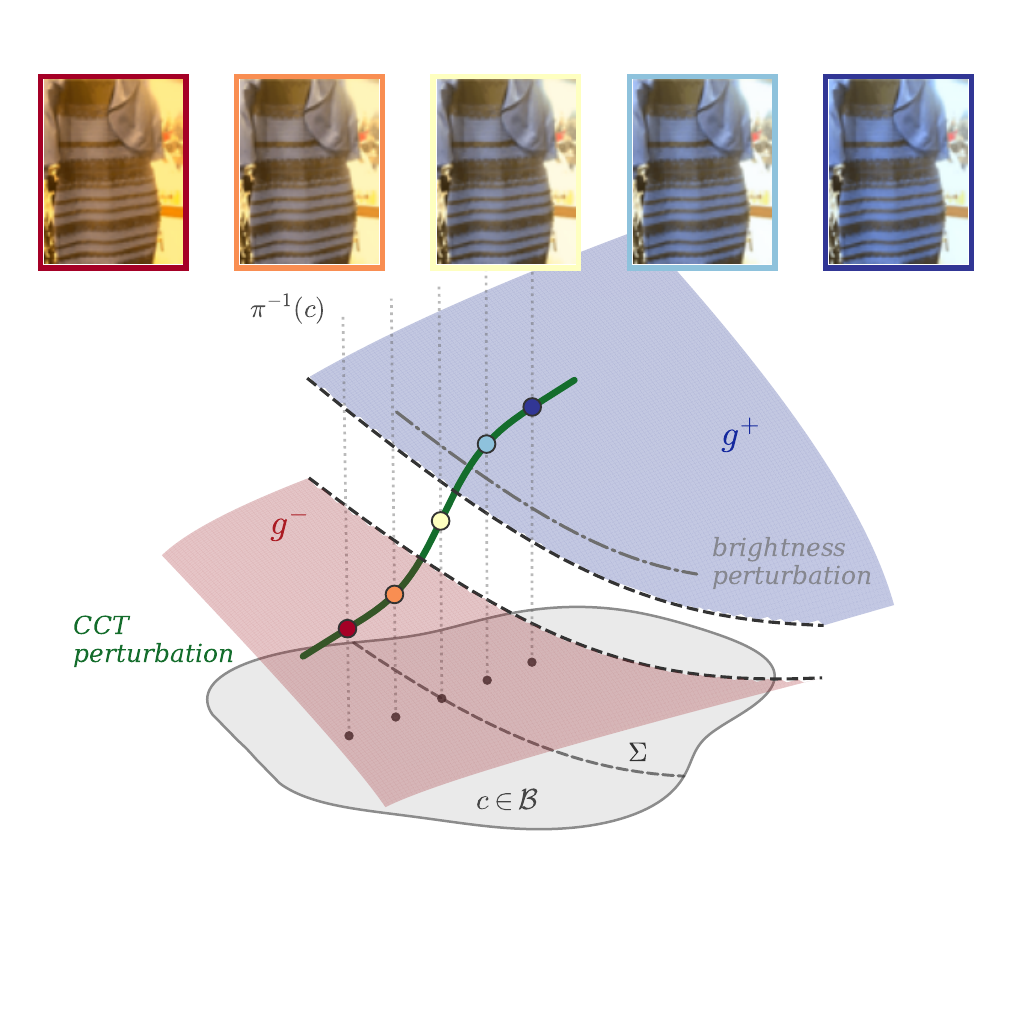}
  \caption{
    \textit{The Dress} as a prior-mode section discontinuity. The prior-mode illumination estimate $\hat I_{\mathrm{prior\ mode}}$ over the image domain $\mathcal{B}$ splits into two branches: neutral-reflectance/bright-light (blue, mode A) and high-reflectance/dim-light (pink, mode B); they meet along the ambiguity locus $\Sigma$ (pre-image shown in dashed line), a codimension-1 set where the two posterior modes have equal height. Each image $c\in\mathcal{B}$ lifts along its fiber $\pi^{-1}(c)$ (vertical dotted lines). A brightness perturbation (grey, dashed) stays on a single branch while a CCT perturbation (green, five temperaturs shown) runs normal to $\Sigma$ and crosses it, so the recovered illumination transitions sharply between branches.
  }
  \label{fig:global_section}

\end{figure}

Our contributions are three-fold:
\begin{itemize}
  \item \emph{Framework.} We identify the geometric object underlying
    intrinsic ambiguity as the discontinuity locus $\ambig$ of the
    prior-mode section $\mapsec$, rather than the bimodality of the
    posterior, and derive its $1/\sqrt{\lambda}$ and $1/\lambda$ smoothing
    signatures from a single variational principle (\cref{sec:theory}).
  \item \emph{Decomposition evidence.} On CGIntrinsics
    ($N = 1998$ valid images, pool $n = 2 \times 10^7$ pixels),
    Careaga DPT yields a partial Spearman correlation
    $r_{\cct\mid\text{ctrl}} = 0.409$ between the per-pixel albedo
    Jacobian along color temperature and dense ground-truth error,
    compared with $r_{\text{bright}} = 0.087$ and $r_{\text{sat}} = 0.021$
    at matched perturbation magnitudes (Wilcoxon
    $r_{\cct}\!>\!r_{\text{bright}}$, $z=11.0$). The
    qualitative ranking replicates on Intrinsic Anything ($N = 100$,
    partial $r = 0.287 / 0.098 / 0.033$).
  \item \emph{Dress evidence.} For The Dress, CLIP ViT-L/14 traced along
    a color-temperature sweep exhibits a discrete Frenet--Serret
    curvature $\kappa = 73.03$ at $\cct \approx 6473\,\mathrm{K}$, with
    a near-isotropic bend. Control dresses peak at $\kappa = 34.75$
    with no comparable feature near D65. This is the curvature observable
    detecting $\ambig$ in a setting where the Jacobian observable cannot
    (\cref{sec:dress}).
\end{itemize}

\paragraph{Related work.}
Our framing sits at the intersection of three lines of work. First, on
the Dress and color constancy, prior accounts explain the bistable
percept through bimodal Bayesian inference over illuminant priors
\citep{witzel2017most,lafer2015striking} or through individual differences
in chromatic adaptation (\cite{macadam1956chromatic}). We argue that the
discontinuity of prior-mode section
makes the phenomenon detectable in any model carrying a prior, even without explicit decomposition training.
Second, on intrinsic image decomposition, classical Retinex
(\cite{land1971lightness}), SIRFS \citep{shelhamer2015scene}, and modern learned
decomposers (such as CGIntrinsics (\cite{li2018cgintrinsics}) and the
Careaga DPT pipeline \citep{careaga2024colorful}) implicitly select a single
branch of $\mapsec$; their failure modes near $\ambig$ have been reported
as ``ambiguous shading'' but, to our knowledge, not localized to a
codimension-one locus with a scaling law. Third, the $1/\sqrt{\lambda}$ jump-smoothing rate is the classical
Tikhonov boundary-layer estimate (\cite{engl1996regularization}), and the
codimension-one mode-switching surface is the Maxwell set of singularity
theory \citep{arnold1986catastrophe}. Connecting these to learned inverses
via a Gaussian-prior Laplace bridge predicts the same exponents in
deep nets, and read them off two unrelated systems (a U-Net decomposer and
a frozen CLIP encoder \cite{radford2021learning}) at the predicted
location.

\paragraph{Outline.}
\Cref{sec:theory} formalizes the prior-mode section, the ambiguity locus
$\ambig$, and the two scaling signatures (proved in Appendices~A and B).
\Cref{sec:decomp,sec:dress} report the decomposition and CLIP-curvature
experiments, and \cref{sec:discussion} closes with the shape of $\ambig$,
limitations, and implications for mode-aware training.

\section{Prior-Mode Sections and the Price of Smoothing}
\label{sec:theory}

\subsection{Problem Setup}
The prior mode is defined following the terminology of \cite{husemoller1966fibre}.

\begin{definition}
  \label{def:section}
  Let $\sectionspace=\mathbb{R}_+^{d}$ denote the \emph{physical space} of pairs $(\refl,\illum)$
  (reflectance, illumination) and $\imagespace$ the \emph{image space} of observed
  color signals $c$.  The \emph{rendering map}
  $\proj\colon\sectionspace\to\imagespace$, defined as a per-pixel product map $c = \proj(\refl_1,\dots, \refl_{d-1},\illum) := (\refl_1 \illum, \dots, \refl_{d-1}\illum)$, is a smooth
  submersion.
  For any given prior $p$ on $\sectionspace$ (encoding illuminant chromaticity and reflectance
  statistics), it induces the \emph{prior-mode section}
  \begin{equation}\label{eq:mapsec}
    \mapsec(c) \;:=\;
    \operatorname*{arg\,max}_{\illum\ \text{s.t.}\ \proj(\refl,\illum)=c} p(\refl,\illum).
  \end{equation}
\end{definition}

\Cref{fig:global_section} illustrates the geometry. Let
$\ambig \subset \imagespace$ be the locus where two branches $\mapsec^{\pm}$
tie for the maximum, so that $\mapsec$ jumps across it; this is the
\emph{Maxwell set} of $p$~\citep{arnold1986catastrophe} and is generically
codimension one. We call $\ambig$ the \emph{ambiguity locus}, over which The
Dress sits, its two perceptual camps being $\mapsec^+$ and $\mapsec^-$.

Although there is no topological obstruction to a global section (since $\sectionspace$ is a smooth fiber bundle over $\imagespace$, per \cite{hatcher2002algebraic}), the prior-mode section in \cref{eq:mapsec} need not be globally single-valued; \Cref{sec:gaussian-mixture} establishes the existence of $\ambig$ under log-Gaussian mixture priors.

\subsection{A Tikhonov model problem}
\label{subsec:model}

We model the effect of learning a smooth approximation to a nonsmooth
prior-selected explanation by a Tikhonov functional on the data manifold.
Let $\mathcal M := \proj(\operatorname{supp}p)\subseteq \imagespace$
denote the manifold of physically realizable images, and let
$\mapsec:\mathcal M\to\bbR^q$ denote the ideal prior-selected output.
For $f:\mathcal M\to\bbR^q$, define
\begin{equation}\label{eq:tikhonov}
  L(f) \;:=\;
  \int_{\mathcal{M}}
  |f(c)-\mapsec(c)|^2
  +\; \lambda\,\bigl\|\nabla_{\!\mathcal{M}} f(c)\bigr\|_F^2
  \;\mathrm{d}\mathrm{vol}(c),
\end{equation}
where $\lambda>0$ is the smoothness strength. The unique minimizer
$f_\lambda \in H^1(\mathcal{M};\bbR^d)$ satisfies the Euler--Lagrange
equation
$
f_\lambda - \lambda \Delta_{\mathcal{M}} f_\lambda = \mapsec.
$
\Cref{app:manifold_el,app:fermi_reduction,app:tikhonov_minimiser} detail the derivation of the approximate solution
$f_\lambda = \mapsec \ast K_\lambda + \mathcal{O}(\sqrt{\lambda})$ with the Laplace kernel
$
K_\lambda(n)
= \frac{1}{2\sqrt{\lambda}}\,e^{-|n|/\sqrt{\lambda}}
$
of effective width $\sqrt{\lambda}$. Thus the regularized model smooths any nonsmooth transition across $\ambig$ over a normal layer of width $O(\sqrt{\lambda})$.

For inverse decomposers, the relevant singularity is a jump in the
prior-selected reflectance or illumination explanation.  The smoothing
layer then forces a large normal Jacobian.

\begin{proposition}[Value jump implies Jacobian concentration]
  \label{prop:jac}
  Let $c_0\in\ambig$ be a regular point, and suppose that $\mapsec$ is
  $C^1$ on the two sides of $\ambig$ with one-sided limits
  $\mapsec^\pm(c_0)$; let $\Delta\mapsec(c_0):=\mapsec^+(c_0)-\mapsec^-(c_0)$ denote the jump.
  Then, in the normal coordinate $n$ centered at $c_0$,
  \[
    |\partial_n f_\lambda(c_0)|
    =
    \frac{|\Delta\mapsec(c_0)|}{2\sqrt{\lambda}}
    +O(1) \to\infty,
    \qquad
    \lambda\to 0.
  \]
  Away from $\ambig$, $f_\lambda$ converges locally to $\mapsec$
  with its first derivatives on each smooth branch.
  Furthermore, we have the Jacobian-error link relationship $| f_\lambda(n) - \mapsec(n) | = \sqrt{\lambda} \left(|f_\lambda'(n)| + \mathcal{O}(1)\right)$.
\end{proposition}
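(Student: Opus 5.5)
The plan is to reduce to the one-dimensional normal problem and work with the Laplace-kernel representation $f_\lambda = \mapsec \ast K_\lambda + \mathcal{O}(\sqrt{\lambda})$ already set up in the paper. Near the regular point $c_0\in\ambig$, I would pass to Fermi coordinates $(n,s)$, with $n$ the signed normal distance to $\ambig$ and $s$ tangential. Since $\ambig$ is a $C^1$ hypersurface near a regular point, the tangential contributions to $\Delta_{\mathcal M}$ and the metric corrections are lower order on the $\sqrt{\lambda}$ scale. The Euler--Lagrange equation then reduces, to leading order, to the ODE
\[
f-\lambda f''=\mapsec(n)
\]
on a neighbourhood $|n|<\delta$. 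Its exact whole-line solution is $f=\mapsec\ast K_\lambda$ with $K_\lambda(n)=\frac{1}{2\sqrt\lambda}e^{-|n|/\sqrt\lambda}$. The cutoff at $|n|=\delta$ and the geometric corrections contribute only exponentially small terms $e^{-\delta/\sqrt\lambda}$ or $O(1)$ terms in the derivative. I expect to need this in derivative form, i.e.\ the remainder should be $O(1)$ in $C^1$, not only $O(\sqrt\lambda)$ in $C^0$.

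Next I split $\mapsec=\mapsec^-(c_0)+\Delta\mapsec(c_0)\,H(n)+r(n)$, where $H$ is the Heaviside function. The remainder $r$ is piecewise $C^1$ with $r(0^\pm)$ bounded by $O(|n|)$, so $r$ is Lipschitz across $0$. For the derivative of the convolution:
\begin{itemize}
\item The constant term contributes nothing.
\item $(H\ast K_\lambda)'(n)=K_\lambda(n)$, which at $n=0$ equals $\frac{1}{2\sqrt\lambda}$.
\item $(r\ast K_\lambda)'=r'\ast K_\lambda$, and since $r'$ is bounded and $\int K_\lambda=1$, this is $O(1)$.
\end{itemize}
Together these give $\partial_n f_\lambda(0)=\frac{\Delta\mapsec(c_0)}{2\sqrt\lambda}+O(1)$, and taking norms gives the first claim.

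For convergence away from $\ambig$, on any compact set at distance $\ge\delta$ from $\ambig$, the function $\mapsec$ is $C^1$ there. The part of $\mapsec$ that sees the jump enters only through kernel tails of size $e^{-\delta/\sqrt\lambda}/\sqrt\lambda\to0$. The smooth part satisfies $g\ast K_\lambda\to g$ and $(g\ast K_\lambda)'=g'\ast K_\lambda\to g'$ locally uniformly by the approximate-identity property. Together these give local $C^1$ convergence on each branch.

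For the Jacobian--error link, I would compute explicitly in the layer. With $\sigma=\sqrt\lambda$, for $n>0$ one has
\[
H\ast K_\lambda(n)-H(n)=-\tfrac12e^{-n/\sigma},\qquad (H\ast K_\lambda)'(n)=\tfrac1{2\sigma}e^{-n/\sigma},
\]
and symmetrically for $n<0$. Hence for the jump part $|f-\mapsec|=\sigma|f'|$ exactly. The remaining terms contribute $O(\sigma)$ to the error, since $|r\ast K_\lambda-r|\le\mathrm{Lip}(r)\int|m|K_\lambda(m)\,dm=O(\sigma)$. They contribute $O(1)$ to $f'$, which after multiplying by $\sigma$ is again $O(\sigma)$. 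Combining these yields $|f_\lambda-\mapsec|=\sqrt\lambda\,(|f_\lambda'|+O(1))$.

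The main obstacle is the reduction step. Justifying that the true manifold minimizer differs from the 1D convolution by $O(1)$ in normal derivative (not just $O(\sqrt\lambda)$ in value) requires a local energy or maximum-principle estimate for $f-\lambda\Delta_{\mathcal M}f$ applied to the difference. I would first try a barrier argument on the corrected ODE solution, bounding the commutator terms coming from curvature of $\ambig$ and the tangential variation of $\Delta\mapsec$. I would then upgrade to a derivative bound via interior Schauder-type scaling on balls of radius $\sqrt\lambda$.
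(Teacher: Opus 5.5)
Your proposal is correct and follows essentially the same route as the paper: reduce to the normal equation $f-\lambda f''=\mapsec$ via Fermi coordinates (the paper's normal-direction-dominance proposition), split off the Heaviside part so that $f_\lambda'(0)=\Delta\mapsec\,K_\lambda(0)+O(1)$, use the exponential decay of $K_\lambda$ away from $\ambig$, and compare $F_\lambda-H$ with $K_\lambda$ to get $|f_\lambda-\mapsec|=\sqrt{\lambda}\,(|f_\lambda'|+O(1))$. Your Lipschitz bound on the remainder is slightly more careful than the paper's appeal to $\mapsec''\,\mathcal{O}(\lambda)$, since the ``smooth part'' can have a kink at $n=0$ and then the error is only $O(\sqrt{\lambda})$; and the $C^1$-level reduction you flag needs no barrier or Schauder argument at the paper's level of rigor, because an $O(\sqrt{\lambda})$ forcing error in the reduced equation contributes at most $O(\sqrt{\lambda})\,\|K_\lambda'\|_{L^1}=O(1)$ to the derivative.
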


For forward encoders, the output is not a reflectance estimate, so the
value-jump observable above is not directly available.  The latent
representation may instead remain continuous as the image crosses
$\ambig$, while its direction of variation changes because the encoder
moves between two competing explanatory branches.  The corresponding local
singularity is therefore a \emph{latent kink}: a continuous path with a
jump in tangent direction.  The same Tikhonov smoothing mechanism then
predicts curvature blow-up.

\begin{proposition}[Latent kink implies curvature concentration]
  \label{prop:kink_curv}
  Let $\gamma_0:\bbR\to\bbR^Q$ be a continuous path that is $C^2$ on the
  two sides of $0$:
  \[
    \gamma_0(n)=
    \begin{cases}
      \gamma_-(n), & n<0,\\
      \gamma_+(n), & n>0,
    \end{cases} \qquad  \gamma_-(0)=\gamma_+(0).
  \]
  Let
  $
  v_-:=\gamma_-'(0), \  v_+:=\gamma_+'(0),
  \  v_m:=\frac{v_-+v_+}{2}, \ \Delta v:=v_+-v_-
  $
  with $\gamma_\lambda=K_\lambda*\gamma_0$ being the one-dimensional Tikhonov
  smoothing.
  If $v_m\neq 0$ and
  the tangent jump has a nonzero transverse component, $ P_{v_m^\perp}\Delta v\neq 0$ where $P_{v_m^\perp}$ denotes orthogonal projection onto the complement of
  $v_m$.
  then the Frenet curvature of $\gamma_\lambda$ satisfies
  \[
    \kappa_{\gamma_\lambda}(0)
    =\frac{\|P_{v_m^\perp}\Delta v\|}
    {2\sqrt{\lambda}\,\|v_m\|^2} +O(1)\to\infty,
    \qquad \lambda\to 0,
  \]
  Hence smoothing a latent tangent switch produces a localized
  curvature peak of order $\lambda^{-1/2}$.
\end{proposition}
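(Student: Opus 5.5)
The plan is a direct computation of the first two derivatives of $\gamma_\lambda = K_\lambda * \gamma_0$ at $n=0$, exploiting the fact that $\gamma_0$ is continuous but $\gamma_0'$ has a jump. I will assume mild growth conditions on $\gamma_\pm$ and their first two derivatives, for instance at most polynomial growth at infinity. The Laplace kernel $K_\lambda(n)=\frac{1}{2\sqrt\lambda}e^{-|n|/\sqrt\lambda}$ has exponentially decaying tails on the scale $\sqrt\lambda$, so all tail contributions are $O(\lambda^k)$ for every $k$.

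\emph{First derivative.} Since $\gamma_0$ is continuous and piecewise $C^2$, its distributional derivative is the piecewise function $\gamma_0'$ with no Dirac part. Hence $\gamma_\lambda' = K_\lambda * \gamma_0'$. Splitting the integral at $0$ and Taylor-expanding $\gamma_\pm'$ around $0$ gives
\[
\gamma_\lambda'(0)=\int_0^\infty K_\lambda(s)\,\gamma_-'(-s)\,ds+\int_0^\infty K_\lambda(s)\,\gamma_+'(s)\,ds .
\]
Each half of the kernel carries mass $\tfrac12$, so this equals $\tfrac12 v_- + \tfrac12 v_+ + O(\sqrt\lambda) = v_m + O(\sqrt\lambda)$. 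The remainder comes from $\int_0^\infty K_\lambda(s)\,s\,ds = \sqrt\lambda/2$ multiplied by bounds on $\gamma_\pm''$ near $0$.

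\emph{Second derivative.} The distributional derivative of $\gamma_0'$ is now $\gamma_0''$ (piecewise) plus $\Delta v\,\delta_0$. Therefore
\[
\gamma_\lambda'' = K_\lambda * \gamma_0'' + \Delta v\,K_\lambda(\cdot),
\qquad\text{so}\qquad
\gamma_\lambda''(0) = \frac{\Delta v}{2\sqrt\lambda} + O(1).
\]
Here the $O(1)$ term is bounded by the local sup of $|\gamma_\pm''|$ plus tails. Equivalently, one can differentiate the split integral and pick up the boundary term $K_\lambda(0)\,(v_+-v_-)$.

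\emph{Curvature.} I will use $\kappa = \|P_{\gamma'^\perp}\gamma''\|/\|\gamma'\|^2$, valid in any $\bbR^Q$. The step needing care is replacing $P_{\gamma_\lambda'(0)^\perp}$ by $P_{v_m^\perp}$. Since $v_m\neq0$, the projection $u\mapsto P_{u^\perp}$ is Lipschitz near $v_m$. The error is therefore $O(\sqrt\lambda)$ in operator norm, and when applied to $\gamma_\lambda''(0)=O(\lambda^{-1/2})$ it contributes only $O(1)$. This cancellation between an $O(\sqrt\lambda)$ error and an $O(\lambda^{-1/2})$ term is the main point to check, and it is exactly why the statement has an $O(1)$ remainder rather than something smaller. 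In the same way, $\|\gamma_\lambda'(0)\|^{-2}=\|v_m\|^{-2}\bigl(1+O(\sqrt\lambda)\bigr)$, which when multiplied against the $\lambda^{-1/2}$ numerator again costs only $O(1)$. Combining these gives
\[
\kappa_{\gamma_\lambda}(0)=\frac{\|P_{v_m^\perp}\Delta v\|}{2\sqrt\lambda\,\|v_m\|^2}+O(1).
\]
This diverges as $\lambda\to0$ precisely because $P_{v_m^\perp}\Delta v\neq0$. The same computation at $n$ with $|n|\gg\sqrt\lambda$ shows that the Dirac contribution decays like $e^{-|n|/\sqrt\lambda}/\sqrt\lambda$, which gives the localization of the peak.
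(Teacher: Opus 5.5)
Your proposal is correct and follows essentially the paper's route. The paper writes $\gamma_0' = v_- + \Delta v\,H + \rho$, with $H$ the Heaviside function, and convolves with $K_\lambda$, so your Dirac term $\Delta v\,\delta_0$ is just the derivative of that Heaviside piece; the projection-perturbation step, $P_{\gamma_\lambda'(0)^\perp} = P_{v_m^\perp} + O(\sqrt{\lambda})$ applied to $\gamma_\lambda''(0) = O(\lambda^{-1/2})$, is identical to the paper's, while your polynomial-growth assumption replaces its local cutoff/Neumann localization and your formula $\gamma_\lambda'' = K_\lambda * \gamma_0'' + \Delta v\,K_\lambda$ makes the paper's differentiation of an $O(\sqrt{\lambda})$ remainder more explicit.
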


The two propositions describe different observables for the same geometric
event.  Inverse decomposers expose the ambiguity boundary through a large
albedo Jacobian, while forward encoders expose it through curvature of a
latent path when the competing latent branches meet with different tangent
directions (numerically verified in \cref{sec:dress}).
Detailed proofs are provided in Appendix~\ref{app:sobolev}.



The regularization term in the Tikhonov problem plays the role of a weight penalty in neural-network training. For shallow feed-forward neural networks, \cite{Rahul2021JMLR} constructs a family of semi-norms in the second-order Radon-domain bounded-variation space, closely related to the Sobolev norm $H^1$ we impose in the Tikhonov model problem.
For deep networks, \citet{rahaman2019spectral} shows an analogous implicit spectral
bias that suppresses high-frequency components and plays the role of~$\lambda$ in~\cref{eq:tikhonov}.
We defer the connection between the neural-network training loss and the Tikhonov problem to Appendix~\ref{app:loss_taxonomy}.

\section[Ambiguity locus in Intrinsic Decomposers: the Albedo Jacobian]{$\ambig$ in Intrinsic Decomposers: the Albedo Jacobian}
\label{sec:decomp}

This section provides the main empirical support for \cref{sec:theory}. We
probe $\ambig$ in the intrinsic decomposition setting via the per-pixel
albedo Jacobian under illuminant perturbation, and confirm that the
pattern survives an architecture change.

\paragraph{Dataset and models.}
We use CGIntrinsics~\citep{li2018cgintrinsics}, a large-scale rendered
dataset that supplies ground-truth albedo (reflectance $\refl$) and
shading $\illum$ for indoor scenes under physically-based illumination.
Our anchor model is the feed-forward DPT-style U-Net of
\citet{careaga2023intrinsic}, trained with an MSE-family loss on
ground-truth albedo: this is the closest empirical analogue to the
Tikhonov model problem of \cref{sec:theory}, with MSE matching the
quadratic fidelity and weight decay acting as an implicit Sobolev
penalty. For an architecture contrast we include Intrinsic Anything
(IA) of \citet{chen2024intrinsicanything}, a latent-diffusion model.

\paragraph{Setup.}
For each image $c$ in CGIntrinsics we construct three families of
perturbed inputs and approximate the Jacobian by central differences:
\begin{itemize}
  \item \textbf{CCT}: white-balance shift~\citep{kang2006computational}
    at four reference color temperatures
    $T \in \{3500, 5000, 6500, 8000\}\,\mathrm{K}$, each probed at
    $T \pm 500\,\mathrm{K}$ ($\delta = 500\,\mathrm{K}$); this is the
    illuminant-prior axis aligned with $\ambig$.
  \item \textbf{Brightness}: uniform linear-RGB scaling $c \mapsto \alpha c$
    at $\alpha \in \{0.70, 0.85, 1.00, 1.15\}$, $\delta = 0.10$.
  \item \textbf{Saturation}: HSV saturation scaling at the same four
    reference scales, $\delta = 0.10$.
\end{itemize}
We run Careaga on each perturbed image, compute the per-pixel
central-difference Jacobian norm $\|\partial \refl / \partial(\text{perturb})\|_{2}$
at each reference point, and take the pixel-wise maximum across the four
reference points as a single sensitivity map per axis. Albedo error
$e := \|\refl_{\text{pred}} - \refl_{\text{GT}}\|_2$ is computed
per pixel on the unperturbed image. The per-image scalars
$r_{\cct}$, $r_{\text{bright}}$, $r_{\text{sat}}$ are Spearman
correlations between each sensitivity map and $e$, restricted to
foreground pixels. Of $2000$ sampled images, $N = 1998$ pass the
foreground-mask threshold; the pooled analysis uses $n = 2 \times 10^7$
pixels.

\paragraph{Marginal vs.\ partial correlation.}
The three perturbation axes are mutually correlated since a white-balance shift
changes chromaticity but also perturbs apparent brightness and
saturation. A raw (\emph{marginal}) Spearman correlation between CCT-sensitivity and error
is therefore inflated by whatever CCT shares with the brightness and
saturation channels. The \emph{partial} Spearman correlation
$r_{\cct\,\cdot\,\text{bright},\text{sat}}$ removes this confound by regressing out
the other two sensitivity channels from both and thus isolates the residual association.

\paragraph{Results.}
\Cref{tab:partialr} reports marginal and partial Spearman correlations
between per-image sensitivity and per-image error.
CCT dominates in both the marginal and partial analyses. Once CCT is
controlled, brightness and saturation carry essentially no residual
signal ($r = 0.087$ and $0.021$), identifying illuminant chromaticity as
the operative axis. Paired Wilcoxon signed-rank tests give
$z = 11.0$ for $r_{\cct} > r_{\text{bright}}$
(mean difference $\Delta\bar{r} = 0.060$) and
$z = 13.2$ for $r_{\cct} > r_{\text{sat}}$ ($\Delta\bar{r} = 0.098$).
Per \Cref{prop:jac}, error and $|Df_\lambda|$ both blow up
near $\ambig$, parameterized by illuminant chromaticity.
The CCT-dominant partial-$r$ is much stronger compared to the brightness and saturation controls, indicating that images closer to $\ambig$ along the
illuminant axis show both higher Jacobian magnitude and higher
reconstruction error.

\begin{table}[ht]
  \centering
  \caption{Spearman correlation between per-image albedo sensitivity and
    albedo error under three illuminant-axis perturbations, for two
    backbones on CGIntrinsics.
    Significance:
    $^{***}\,p<0.001$, $^{**}\,p<0.01$, $^{*}\,p<0.05$, taking $N$ images
  as the effective sample size to account for within-image pixel dependence.}
  \label{tab:partialr}
  \begin{tabular}{llccc}
    \toprule
    Model & Metric & CCT & Brightness & Saturation \\
    \midrule
    \multirow{2}{*}{Careaga DPT ($N=1998$)} & Marginal $r$ & $\textbf{0.547}$$^{***}$ & $0.399$$^{***}$ & $0.313$$^{***}$ \\
    & Partial $r$ & $\mathbf{0.409}$$^{***}$ & $0.087$$^{***}$ & $0.021$ \\
    \midrule
    \multirow{2}{*}{IntrinsicAnything ($N=100$)} & Marginal $r$ & $\textbf{0.410}$$^{***}$ & $0.318$$^{**}$ & $0.212$$^{*}$ \\
    & Partial $r$ & $\mathbf{0.287}$$^{**}$ & $0.098$ & $0.033$ \\
    \bottomrule
  \end{tabular}
\end{table}

\Cref{fig:binplot} bins the $N = 1998$ images by sensitivity quantile
and plots mean error per bin for each axis. The CCT curve rises
steadily while the brightness and saturation curves are markedly flatter in the later bins. This indicates that the CCT sensitivity is a better proxy for model error.
We also put the per-image case study (\cref{fig:heatmaps2}) in Appendix~\ref{careaga}.

\begin{figure}[ht]
  \centering
  \includegraphics[width=0.72\linewidth]{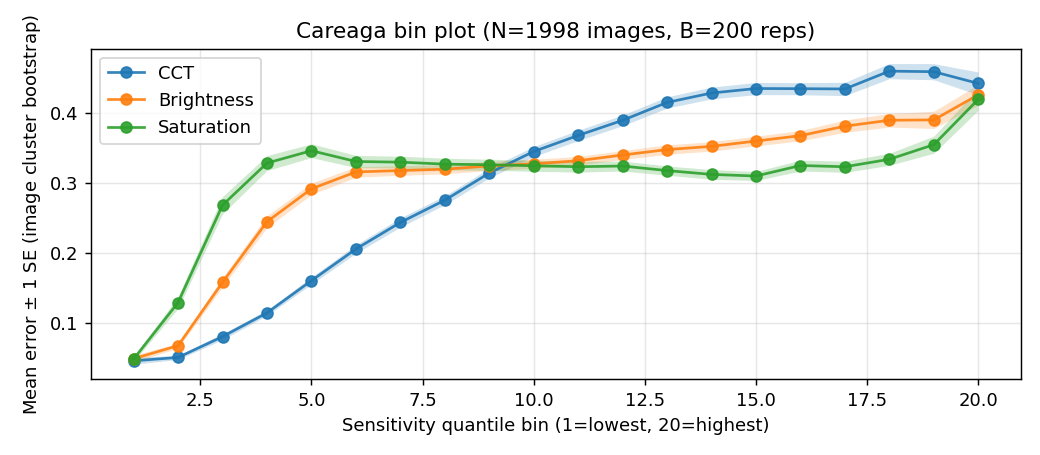}
  \caption{Mean error vs.\ sensitivity-quantile bin for CCT (blue),
    brightness (orange), and saturation (green) perturbations. Each bin
  pools $1/20$ of the 20M pixels. The CCT mean error rises steadily while the other two controls are much flatter.}
  \label{fig:binplot}
\end{figure}

\paragraph{Replication in a diffusion inverter.}
To check that the CCT-dominant ordering is not an artefact of a single
feed-forward backbone, we re-run the protocol on Intrinsic Anything (IA;
\cite{chen2024intrinsicanything}), a latent-diffusion model that differs
from Careaga in both architecture and training, on $N = 100$ held-out
CGIntrinsics images (compute-limited, hence higher variance). The partial-$r$
ordering is preserved (CCT-dominant; \cref{tab:partialr}): the
$r_{\cct} > r_{\text{sat}}$ gap is significant (Wilcoxon $z = 3.50$), while
$r_{\cct} > r_{\text{bright}}$ does not reach significance at this $N$
($p = 0.46$), consistent with the smaller effect size and reduced power.

\section[Ambiguity locus in CLIP Latent Space: Curvature at The Dress]{$\ambig$ in CLIP Latent Space: Curvature at The Dress}
\label{sec:dress}

\Cref{sec:theory,sec:decomp} established the Jacobian observable on an
inverse model (image $\to$ reflectance). We now turn to the \emph{curvature}
observable on a forward encoder and show that the same ambiguity locus
$\ambig$ is detectable there.

\paragraph{Setup.}
\label{sec:dress:setup}

We use CLIP ViT-L/14 (OpenAI weights, \cite{radford2021learning}) as the
encoder. The image space $\imagespace$ is parameterized by color
temperature $\cct \in [3000, 10000]\,\mathrm{K}$: for each $\cct$ we apply a
Bradford chromatic-adaptation transform (CAT, \cite{hunt1997reversing}) from
D65 ($6504\,\mathrm{K}$) to $\cct$, embed the adapted image with CLIP, and
$\ell^2$-normalize the resulting $768$-dimensional vector. This yields a
smooth path $e(\cct) \in \mathbb{S}^{767}$. The sweep uses $75$ log-spaced
steps; curvature quantities are evaluated on Gaussian-smoothed embeddings
($\sigma = 2$ steps) to suppress numerical noise. As a control, we apply
the identical CAT sweep to a Gemini-generated dress with a neutral blue
tone.
To quantify how sharply the latent trajectory bends, we use the discrete
Frenet-Serret curvature
$
\kappa_i := \|\Delta\hat{t}_i\| / \tfrac{1}{2}(\|\Delta e_{i-1}\|+\|\Delta e_i\|)
$
where $\hat{t}_i = \Delta e_i / \|\Delta e_i\|$ is the unit tangent and
$\Delta\hat{t}_i = \hat{t}_{i+1} - \hat{t}_i$. This is the reciprocal of the
local osculating-circle radius and, crucially, is invariant to monotone
reparameterizations of the path, thus isolating direction change from the
encoder's speed warp along the CCT axis.

\paragraph{Curvature peak at D65.}
\label{sec:dress:curvature}

\Cref{fig:dress_curvature} shows the main result. The Dress exhibits a
single prominent curvature peak at $\cct = 6473\,\mathrm{K}$
($\kappa = 73.03$), within one sweep step of the canonical daylight
illuminant D65 ($6504\,\mathrm{K}$) under which the original photograph was taken.
By contrast, the control dress shows only modest, off-D65 peaks. The
same pattern recurs across three further encoders; see
\cref{fig:dress_curvature_full} in the appendix.

\section{Discussion and Conclusion}
\label{sec:discussion}

\paragraph{$\ambig$ and the geometric picture.}
The locus $\ambig$ is determined by the training prior $p(\refl,\illum)$,
not architecture: for both decomposers it lies along the warm-to-daylight
illuminant boundary (\cref{sec:decomp}); for CLIP the curvature peak at
D65 (\cref{sec:dress}) reflects a bimodal illuminant prior.  That the
same locus surfaces across architectures and observables is the central
empirical finding.  Recasting The Dress as a singularity of
$\mapsec$ makes it observable in any learned
system, and the $1/\!\sqrt{\lambda}$, $1/\lambda$ scalings of
\cref{sec:theory} promote it from qualitative failure mode to
quantitative signature.

\paragraph{Conclusion and implication for mode-aware training.}
The bistable percept is the human-visible tip of a codimension-one
discontinuity that any approximation $f_\lambda$ inherits as a Jacobian
or curvature spike, verified here on two unrelated decomposers and a
frozen CLIP encoder at the predicted locus.  Because the albedo Jacobian
proxies prediction error, downweighting high-Jacobian samples under CCT
perturbation provides a principled, mode-aware alternative to uniform
regularization.

\ifx\tagdsmode\tagdssubmission
\else
\acks{Acknowledgements go here.}
\fi

\bibliographystyle{plainnat}
\bibliography{ref}

\appendix
\section{From Training Losses to Prior-Mode Section}
\label{app:loss_taxonomy}

This appendix supports \cref{subsec:model} by showing that
standard training objectives recover the prior-mode section
$\mapsec$, and that empirical risk minimisation (ERM) on finite
data preserves this conclusion.

\paragraph{Population minimiser}
\label{app:pop_min}

When the label $y_i = \mapsec(c_i)$ is deterministic and noise-free, the population minimiser of any proper scoring rule (including MSE and cross-entropy) is $f^* = \mapsec$ a.e.\,.
Standard ERM consistency (e.g.\ via Rademacher complexity bounds; \citealt{bartlett2002rademacher}) ensures that with sufficient data the empirical minimiser targets~$\mapsec$.
The crucial corollary is that $f^*$ is the prior mode rather than the posterior mean. The posterior is bimodal near~$\ambig$ and the posterior mean lies between the two modes (typically off the fibre) among which $f^*=\mapsec$ selects one of them.

\paragraph{CLIP (InfoNCE)}
\label{app:clip}

CLIP's InfoNCE loss lower-bounds mutual information under a
Gaussian mixture model for the embeddings.  Under that prior,
$\log p(\refl,\illum\mid c)$ is quadratic and
$\mapsec(c)=\operatorname*{arg\,max}p$ is the nearest mixture
centre.  InfoNCE pushes each embedding toward its conditional MAP
centre; the limiting encoder therefore approximates the prior-mode
section for the Gaussian-mixture prior.

\paragraph{Careaga (Ordinal shading MSE)}
\label{app:careaga}

\cite{careaga2023intrinsic} train with a direct MSE on the ordinal ground-truth shading and the albedo is an immediate consequence by dividing the observed image pixels with the recovered shading map.
Thus, the model problem can be readily applied to the shading prediction and thus the singularity at the ambiguous locus translates to the reflectance prediction.
\section{Tikhonov Model: Full Derivations}
\label{app:sobolev}

This appendix proves \cref{prop:jac,prop:kink_curv} from
\cref{sec:theory}.  The argument proceeds in three
stages: we first derive the Euler-Lagrange equation on the data manifold
$\mathcal{M}$ (\cref{app:manifold_el}), then show that it reduces to a
one-dimensional ODE in the normal direction to $\ambig$
(\cref{app:fermi_reduction}), and finally solve the one-dimensional problem
explicitly via Fourier analysis (\cref{app:tikhonov_minimiser}).  The
proposition proofs in
\cref{app:jac_proof,app:curve} use only the one-dimensional
convolution formula.

\subsection[Euler-Lagrange Equation on M]{Euler-Lagrange Equation on $\mathcal{M}$}
\label{app:manifold_el}

Let $(\mathcal{M},G)$ be an orientable, smooth, and compact Riemannian manifold of dimension $m$.  For
$f\colon\mathcal{M}\to\bbR$ in $H^1(\mathcal{M};\bbR)$ and any given $\mapsec\in L^2(\mathcal{M};\bbR)$, the Tikhonov
functional \eqref{eq:tikhonov} reads
\[
  L(f) \;=\; \int_{\mathcal{M}}
  |f(c)-\mapsec(c)\bigr|^2
  +\, \lambda\,\bigl\|\nabla_{\!\mathcal{M}} f(c)\bigr\|_F^2
  \;\mathrm{d}\mathrm{vol}(c),
\]
where the Frobenius norm of the Jacobian is
$\|\nabla_{\!\mathcal{M}} f\|_F^2 = G^{ij}\,\partial_i f\,\partial_j f$
in local coordinates.  The functional is
strictly convex and coercive on $H^1$ for $\lambda>0$; existence and uniqueness of a
minimiser follow by direct methods in the calculus of
variations~\cite{evans2022partial}.

\begin{lemma}[Euler-Lagrange equation]\label{lem:el}
  The minimiser $f_\lambda$ satisfies
  \begin{equation}\label{eq:el_manifold}
    f_\lambda \;-\; \lambda\,\Delta_{\mathcal{M}}\, f_\lambda
    \;=\; \mapsec,
  \end{equation}
  in the weak sense on $\mathcal{M}$, where
  $\Delta_{\mathcal{M}} = \frac{1}{\sqrt{\det G}}\,\partial_i\!\bigl(
  \sqrt{\det G}\;G^{ij}\,\partial_j\bigr)$
  is the Laplace-Beltrami operator.
  If $\partial\mathcal{M}\neq\emptyset$,
  the natural boundary condition is
  Neumann: $\partial_\nu f_\lambda = 0$ on $\partial\mathcal{M}$,
  where $\nu$ is the normal vector pointing outwards.
\end{lemma}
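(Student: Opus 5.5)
The plan is to take the first variation of $L$ at the minimiser $f_\lambda$ and convert it to \eqref{eq:el_manifold} by integration by parts on $\mathcal{M}$. Since $f_\lambda$ exists and is unique in $H^1(\mathcal{M};\bbR)$ by the direct method, as noted just before the lemma, the map
\[
t\mapsto L(f_\lambda+t\varphi)
\]
has a minimum at $t=0$ for every test function $\varphi\in H^1(\mathcal{M};\bbR)$. Expanding the quadratic functional gives
\[
L(f_\lambda+t\varphi)=L(f_\lambda)+2t\int_{\mathcal{M}}\bigl[(f_\lambda-\mapsec)\varphi+\lambda\,G^{ij}\partial_i f_\lambda\,\partial_j\varphi\bigr]\,\mathrm{d}\mathrm{vol}+t^2\int_{\mathcal{M}}\bigl[\varphi^2+\lambda\|\nabla_{\mathcal{M}}\varphi\|^2\bigr]\,\mathrm{d}\mathrm{vol}.
\]
Setting the derivative at $t=0$ to zero yields the weak form
\[
\int_{\mathcal{M}} (f_\lambda-\mapsec)\varphi+\lambda\langle\nabla f_\lambda,\nabla\varphi\rangle_G\,\mathrm{d}\mathrm{vol}=0\qquad\text{for all }\varphi\in H^1.
\]
Every expression is finite because $f_\lambda,\varphi\in H^1$ and $\mapsec\in L^2$. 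This weak identity is exactly what ``$f_\lambda-\lambda\Delta_{\mathcal{M}}f_\lambda=\mapsec$ in the weak sense'' means.

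Next I connect the weak form to the stated coordinate formula for $\Delta_{\mathcal{M}}$. In a chart, $\mathrm{d}\mathrm{vol}=\sqrt{\det G}\,dx$. For $\varphi$ smooth with compact support in the chart, and assuming for the moment that $f_\lambda$ is smooth, integration by parts gives
\[
\int G^{ij}\partial_i f\,\partial_j\varphi\,\sqrt{\det G}\,dx=-\int \varphi\,\frac{1}{\sqrt{\det G}}\partial_j\bigl(\sqrt{\det G}\,G^{ij}\partial_i f\bigr)\sqrt{\det G}\,dx .
\]
A partition of unity subordinate to an atlas, using orientability for a global volume form, patches these local identities into Green's identity
\[
\int_{\mathcal{M}}\langle\nabla f,\nabla\varphi\rangle=-\int_{\mathcal{M}}\varphi\,\Delta_{\mathcal{M}}f+\int_{\partial\mathcal{M}}\varphi\,\partial_\nu f\,\mathrm{d}\sigma.
\]
Without the smoothness assumption, elliptic regularity takes over: $(I-\lambda\Delta_{\mathcal{M}})f_\lambda=\mapsec\in L^2$ gives $f_\lambda\in H^2$ locally, and up to the boundary if $\partial\mathcal{M}$ is smooth. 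Green's identity therefore holds by density, and the equation holds a.e. as well as weakly.

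For the boundary condition, I substitute Green's identity into the weak form. Taking first $\varphi\in C_c^\infty(\operatorname{int}\mathcal{M})$ recovers $f_\lambda-\lambda\Delta_{\mathcal{M}}f_\lambda=\mapsec$ in $L^2$. The interior terms then cancel for an arbitrary $\varphi\in H^1$, leaving
\[
\int_{\partial\mathcal{M}}\varphi\,\partial_\nu f_\lambda\,\mathrm{d}\sigma=0.
\]
Traces of $H^1$ functions are dense in $L^2(\partial\mathcal{M})$, so $\partial_\nu f_\lambda=0$. No boundary condition was imposed on the admissible class, so this Neumann condition is natural.

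I expect the main obstacle to be the regularity step that makes $\Delta_{\mathcal{M}}f_\lambda$ and the trace $\partial_\nu f_\lambda$ meaningful. If one wants to avoid $H^2$ theory, the statement can be read purely weakly: the weak identity above is the definition of both the equation and the Neumann condition. In that case the lemma follows from the first-variation computation alone.
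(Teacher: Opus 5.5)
Your proposal is correct and takes the same route as the paper: you set the first variation of $L$ at $f_\lambda$ to zero for all $H^1$ perturbations, then integrate by parts with the Riemannian divergence theorem to obtain the interior equation and the natural Neumann condition. Your additional steps simply supply justifications that the paper's short proof leaves implicit: elliptic $H^2$ regularity to make $\Delta_{\mathcal{M}} f_\lambda$ and $\partial_\nu f_\lambda$ meaningful, testing first with interior test functions and only then isolating the boundary term, and density of $H^1$ traces in $L^2(\partial\mathcal{M})$.
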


\begin{proof}
  Take a perturbation $\eta \in H^1(\mathcal{M};\bbR)$ and the stationary condition follows the vanishing first variation:
  \begin{align*}
    0 \;=\; \frac{d}{d\varepsilon}\Big|_{\varepsilon=0}
    L(f_\lambda + \varepsilon\eta)
    &\;=\; 2\int_{\mathcal{M}}
    \bigl\langle f_\lambda - \mapsec,\,\eta\bigr\rangle
    +\, \lambda\,
    G^{ij}\,\partial_i f_\lambda\,\partial_j \eta
    \;\mathrm{d}\mathrm{vol}.
  \end{align*}
  Integrating the gradient term by parts on $\mathcal{M}$ (Riemannian
  divergence theorem), the boundary integral vanishes for $\eta$
  unconstrained on $\partial\mathcal{M}$ iff
  $\partial_\nu f_\lambda = 0$, and the bulk integral gives
  \[
    \int_{\mathcal{M}}
    \bigl\langle
    f_\lambda - \mapsec - \lambda\,\Delta_{\mathcal{M}} f_\lambda,
    \;\eta
    \bigr\rangle \;\mathrm{d}\mathrm{vol}
    \;=\; 0
    \qquad \forall\,\eta,
  \]
  which yields \eqref{eq:el_manifold} a.e.
\end{proof}


\subsection{Boundary-Layer Reduction via Fermi Coordinates}
\label{app:fermi_reduction}

Let $\ambig \subset \mathcal{M}$ be a smooth hypersurface
(codimension one, which will be motivated by a concrete example in \cref{sec:gaussian-mixture}).  By the tubular neighbourhood theorem, there exists
$\delta_0 > 0$ such that the map $(n,\tau) \mapsto \exp_\tau(n\,\nu_\tau)$
is a diffeomorphism from $(-\delta_0,\delta_0)\times\ambig$ onto a tubular
neighbourhood $U_{\delta_0}$ of $\ambig$ in $\mathcal{M}$.  Here $n$ is
signed distance to $\ambig$,
$\tau \in \ambig$ is the footpoint, and
$\nu_\tau$ the unit normal at $\tau$.  In these Fermi normal
coordinates, the metric takes the form
\begin{equation}\label{eq:fermi_metric}
  ds^2 \;=\; dn^2 \;+\; h_{ij}(n,\tau)\,d\tau^i\,d\tau^j,
\end{equation}
where $h_{ij}(0,\tau) = \bar{G}_{ij}(\tau)$ is the induced metric on
$\ambig$ (\cite{gray2003tubes}).

\begin{lemma}[Laplace-Beltrami decomposition]\label{lem:lb_decomp}
  In Fermi coordinates \eqref{eq:fermi_metric}, the Laplace-Beltrami
  operator on $\mathcal{M}$ decomposes as
  \begin{equation}\label{eq:lb_fermi}
    \Delta_{\mathcal{M}} \;=\;
    \partial_n^2
    \;+\; H(n,\tau)\,\partial_n
    \;+\; \Delta_{\ambig_n},
  \end{equation}
  where
  $H(n,\tau) = \partial_n \log\sqrt{\det h(n,\tau)}$
  is the mean curvature of the level set $\ambig_n := \{n = \mathrm{const}\}$,
  and $\Delta_{\ambig_n}$ is the intrinsic Laplacian on $\ambig_n$.
\end{lemma}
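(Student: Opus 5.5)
The plan is to compute $\Delta_{\mathcal M}$ directly from the coordinate formula in \cref{lem:el}, $\Delta_{\mathcal M} = \frac{1}{\sqrt{\det G}}\partial_a(\sqrt{\det G}\,G^{ab}\partial_b)$, using the block-diagonal form \eqref{eq:fermi_metric} of the metric. In Fermi coordinates $(n,\tau^1,\dots,\tau^{m-1})$ the metric matrix is $G=\mathrm{diag}(1,h(n,\tau))$. Hence $\det G=\det h$ and $G^{-1}=\mathrm{diag}(1,h^{-1})$, and in particular $G^{nn}=1$ and $G^{n i}=0$. These two facts are the Gauss lemma for the normal exponential map, and they are exactly what \eqref{eq:fermi_metric} encodes.

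First, I substitute into the divergence formula and split the sum over $a,b$ into the $(n,n)$ block and the $(i,j)$ block; the cross terms vanish because $G^{ni}=0$.

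The $(n,n)$ block gives
\[
\frac{1}{\sqrt{\det h}}\,\partial_n\bigl(\sqrt{\det h}\,\partial_n f\bigr)
= \partial_n^2 f + \bigl(\partial_n\log\sqrt{\det h}\bigr)\,\partial_n f ,
\]
which is $\partial_n^2 f + H\,\partial_n f$ with $H$ as defined in the statement.

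The $(i,j)$ block gives $\frac{1}{\sqrt{\det h}}\partial_i(\sqrt{\det h}\,h^{ij}\partial_j f)$ with $n$ frozen. On the level set $\ambig_n$, whose induced metric is $h(n,\cdot)$ in the coordinates $\tau$, this is precisely the intrinsic Laplace--Beltrami operator $\Delta_{\ambig_n}$. Adding the two blocks yields \eqref{eq:lb_fermi}.

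The remaining step is to justify the name ``mean curvature'' for $H$. I would use the first variation of the area element. The second fundamental form of $\ambig_n$ with respect to the normal $\partial_n$ is $\tfrac12\partial_n h_{ij}$, since $\partial_n$ is a unit geodesic field. Jacobi's formula then gives $\partial_n\log\sqrt{\det h}=\tfrac12 h^{ij}\partial_n h_{ij}$, which is the trace of the shape operator: the (unnormalized) mean curvature of $\ambig_n$.

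I expect the main obstacle to be bookkeeping rather than substance. One must justify that Fermi coordinates have no $dn\,d\tau$ cross terms, which needs the Gauss lemma for the normal exponential map and I would cite it from \cite{gray2003tubes}. One must also fix the sign and normalization convention for $H$: it is the trace of the shape operator, not the trace divided by $m-1$. Everything else is the standard divergence formula, valid throughout the tubular neighbourhood $U_{\delta_0}$ where the coordinates are a diffeomorphism.
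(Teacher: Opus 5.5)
Your proposal is correct and follows essentially the same route as the paper: the block-diagonal Fermi metric gives $\det G=\det h$, $G^{nn}=1$ and $G^{ni}=0$, and the divergence formula then splits into the normal term $\partial_n^2 f + (\partial_n\log\sqrt{\det h})\,\partial_n f$ and a tangential term, which is the intrinsic Laplacian on $\ambig_n$. Your additional step, identifying $H$ with the trace of the shape operator via $\tfrac12\partial_n h_{ij}$ and Jacobi's formula, justifies the ``mean curvature'' label that the paper only asserts, so it strengthens the argument rather than changing it.
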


\begin{proof}
  In coordinates $(x^0,x^1,\dots,x^{m-1}) = (n,\tau^1,\dots,\tau^{m-1})$,
  the full metric is block-diagonal:
  $G_{00} = 1$, $G_{0i} = 0$, $G_{ij} = h_{ij}$ for $i,j\ge 1$.
  Hence $\det G = \det h$, $G^{00} = 1$, and
  \begin{align*}
    \Delta_{\mathcal{M}} f
    &= \frac{1}{\sqrt{\det h}}\;\partial_n\!\bigl(\sqrt{\det h}\;\partial_n f\bigr)
    \;+\; \frac{1}{\sqrt{\det h}}\;\partial_i\!\bigl(
    \sqrt{\det h}\;h^{ij}\,\partial_j f\bigr) \\
    &= \partial_n^2 f
    \;+\; \frac{\partial_n\sqrt{\det h}}{\sqrt{\det h}}\;\partial_n f
    \;+\; \Delta_{\ambig_n} f.
  \end{align*}
\end{proof}

\begin{proposition}[Normal-direction dominance]\label{prop:bl}
  Let $\mapsec$ be $L^2$ on $\mathcal{M}$ and $C^1$ on $\mathcal{M}\setminus\ambig$ with a jump
  discontinuity $\Delta\mapsec$ across $\ambig$.  Then, as
  $\lambda\to 0$, the minimiser $f_\lambda$ of \cref{eq:tikhonov} satisfies,
  within the transition layer $|n| \lesssim \sqrt{\lambda}$:
  \begin{align}
    \label{eq:scaling}
    \partial_n^2 f_\lambda \sim O(\lambda^{-1}), \quad    H\,\partial_n f_\lambda \sim O(\lambda^{-1/2}),  \quad  \Delta_{\ambig_n}\, f_\lambda \sim O(1).
  \end{align}
  Consequently, the Euler-Lagrange equation \cref{eq:el_manifold}
  reduces to
  \begin{equation}\label{eq:el_1d}
    f_\lambda \;-\; \lambda\,\partial_n^2 f_\lambda
    \;=\; \mapsec + O\!\bigl(\sqrt{\lambda}\bigr)
  \end{equation}
  in the transition layer.  Outside the layer ($|n| \gg \sqrt\lambda$),
  $f_\lambda \to \mapsec$ pointwise.
\end{proposition}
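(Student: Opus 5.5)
We prove \cref{prop:bl} by a matched-asymptotics argument. The first step establishes the scalings \eqref{eq:scaling} for the three pieces of the Laplace--Beltrami decomposition \eqref{eq:lb_fermi}. The second step feeds them into \eqref{eq:el_manifold} to isolate the normal ODE. We work in the tubular neighbourhood $U_{\delta_0}$ with Fermi coordinates $(n,\tau)$ and introduce the stretched variable $s:=n/\sqrt{\lambda}$. Inside the layer we write the ansatz $f_\lambda(n,\tau)=F(s,\tau)+O(\sqrt\lambda)$, where $F$ and its $s$- and $\tau$-derivatives are $O(1)$ uniformly in $\lambda$. This ansatz is justified a posteriori: the one-dimensional problem $F-\partial_s^2F=\mapsec^\pm(\tau)$ on $\{s\gtrless 0\}$, with $C^1$ matching at $s=0$, has the explicit bounded solution
\[
F(s,\tau)=\mapsec^-(\tau)+\Delta\mapsec(\tau)\,\Phi(s),
\]
where $\Phi$ is the Laplace-kernel profile, i.e.\ the antiderivative of $\tfrac12 e^{-|s|}$. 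This $F$ depends smoothly on $\tau$ because $\mapsec^\pm$ are $C^1$ up to $\ambig$.

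With the ansatz in hand, the orders follow from the chain rule. Since $\partial_n=\lambda^{-1/2}\partial_s$, we get $\partial_n^2 f_\lambda=\lambda^{-1}\partial_s^2F=O(\lambda^{-1})$ and $\partial_n f_\lambda=O(\lambda^{-1/2})$. The mean curvature $H(n,\tau)=\partial_n\log\sqrt{\det h}$ is smooth and bounded on the compact closure of $U_{\delta_0}$, because $h$ is smooth and nondegenerate there. Hence $H\partial_nf_\lambda=O(\lambda^{-1/2})$. The tangential operator $\Delta_{\ambig_n}$ involves only $\tau$-derivatives, with coefficients $h^{ij}(n,\tau)$ that are smooth in $n$, so $\Delta_{\ambig_n}f_\lambda=O(1)$ provided $F$ is twice differentiable in $\tau$. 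Here we would either assume $\mapsec^\pm\in C^2$ along $\ambig$ or interpret $\Delta_{\ambig_n}f_\lambda$ weakly.

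Substituting into $f_\lambda-\lambda\Delta_{\mathcal M}f_\lambda=\mapsec$ yields
\[
f_\lambda-\lambda\partial_n^2f_\lambda-\lambda H\partial_nf_\lambda-\lambda\Delta_{\ambig_n}f_\lambda=\mapsec .
\]
The two extra terms are $\lambda\cdot O(\lambda^{-1/2})=O(\sqrt\lambda)$ and $\lambda\cdot O(1)=O(\lambda)$. This gives \eqref{eq:el_1d}.

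For the region outside the layer, we fix a point with $|n|\gg\sqrt\lambda$, or any point of $\mathcal M\setminus\ambig$. There we use that $(I-\lambda\Delta_{\mathcal M})^{-1}$ is a contraction on $L^\infty$ by the maximum principle, together with its kernel bounds. The heat-kernel/resolvent representation shows that the resolvent kernel at scale $\sqrt\lambda$ decays like $e^{-d/\sqrt\lambda}$. Splitting $\mapsec$ into a $C^1$ piece near the point and a bounded remainder, the local part gives $f_\lambda-\mapsec=O(\sqrt\lambda)$ by Taylor expansion. The far part, which includes the jump, contributes $O(e^{-|n|/\sqrt\lambda})\to 0$. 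A comparison (barrier) argument can alternatively give this directly.

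The main obstacle is making the ansatz rigorous, i.e.\ proving that $\partial_s$- and $\tau$-derivatives of the rescaled solution are uniformly bounded, rather than assuming them. The first approach is an energy and comparison argument. Set $w:=f_\lambda-F(n/\sqrt\lambda,\tau)\chi(n)$ with a cutoff $\chi$. Then $w$ satisfies $(I-\lambda\Delta_{\mathcal M})w=O(\sqrt\lambda)$ in $L^\infty$, so the maximum principle gives $\|w\|_\infty=O(\sqrt\lambda)$. Rescaled interior Schauder estimates on balls of radius $\sqrt\lambda$ then upgrade this to bounds on $\partial_n w$ and on tangential derivatives. If the tangential second-derivative bound fails at this regularity, we would retreat to stating the $O(1)$ bound on $\Delta_{\ambig_n}f_\lambda$ in the weak, distributional sense, which still suffices for the reduced equation.
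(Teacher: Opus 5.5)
Your main argument is the paper's own scaling argument, made explicit. The stretched variable $s=n/\sqrt\lambda$ and the inner profile $\mapsec^-+\Delta\mapsec\,\Phi(s)$ (the rescaled $F_\lambda$ from the proof of \cref{prop:jac}) turn the paper's ``width $\delta=O(\sqrt\lambda)$, amplitude $A=O(|\Delta\mapsec|)$'' heuristic into a concrete ansatz. Your resolvent-kernel/Taylor bound outside the layer replaces the paper's bare appeal to elliptic regularity with a quantitative estimate. At the paper's level of rigour this is fine. You are also right that the stated $C^1$ hypothesis does not by itself give $\Delta_{\ambig_n}f_\lambda=O(1)$; the paper assumes this tacitly through its tangential scale $\ell$.

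The step that would fail as written is in your rigorization. Take $w=f_\lambda-F(n/\sqrt\lambda,\tau)\chi(n)$. Its residual $(I-\lambda\Delta_{\mathcal M})w$ is not $O(\sqrt\lambda)$ in $L^\infty$, for two reasons. First, $F$ freezes the branches at $n=0$, so inside the tube the residual contains $\mapsec^\pm(n,\tau)-\mapsec^\pm(0,\tau)=O(|n|)$, which is $O(1)$ for $|n|\sim\delta_0$. Second, outside $\operatorname{supp}\chi$ the residual is $\mapsec$ itself. The maximum principle therefore gives only $\|w\|_\infty=O(1)$.

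The fix is to split off the smooth part first, as the paper does in one dimension.
\begin{itemize}
\item Write $\mapsec=\mapsec_{\mathrm{sm}}+\Delta\mapsec(\tau)\,\chi(n)\,\mathbf{1}_{\{n>0\}}$, with $\mapsec_{\mathrm{sm}}$ Lipschitz across $\ambig$.
\item Use your kernel/Taylor argument, which works uniformly for Lipschitz data, to get $(I-\lambda\Delta_{\mathcal M})^{-1}\mapsec_{\mathrm{sm}}=\mapsec_{\mathrm{sm}}+O(\sqrt\lambda)$.
\item Compare the jump part with $\Delta\mapsec(\tau)\chi(n)\Phi(n/\sqrt\lambda)$. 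Its residual is genuinely $O(\sqrt\lambda)$ when $\Delta\mapsec$ is $C^2$ along $\ambig$, so the maximum principle applies.
\end{itemize}

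Note also that Schauder estimates on balls of radius $\sqrt\lambda$ bound all second derivatives of the remainder only by $O(\lambda^{-1/2})$. They therefore give $\lambda\,\Delta_{\ambig_n}f_\lambda=O(\sqrt\lambda)$, not $\Delta_{\ambig_n}f_\lambda=O(1)$. That already suffices for \eqref{eq:el_1d}, so no weak reinterpretation is needed. The stated $O(1)$ tangential bound, however, requires differentiating the equation along $\ambig$ and $C^2$ tangential regularity of $\mapsec^\pm$.
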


\begin{proof}
  We start by a scaling argument.
  The target $\mapsec$ jumps by $O(|\Delta\mapsec|)$ across $\ambig$
  but is smooth along $\ambig$.  The minimiser $f_\lambda$ interpolates
  this jump over a layer of width $\delta = O(\sqrt{\lambda})$
  (as determined a~posteriori by the one-dimensional solution in
  \cref{app:tikhonov_minimiser}).  Within this layer, denote the
  characteristic size of $f_\lambda - \mapsec$ by $A = O(|\Delta\mapsec|)$.
  Meanwhile, the mean-curvature term $H(n,\tau) = H(0,\tau) + O(n)$ is bounded on the layer, so
  $H\,\partial_n f_\lambda \sim
  O(1) \cdot O(|\Delta\mapsec|/\sqrt\lambda)
  = O(\lambda^{-1/2})$.

  For the normal derivatives,
  the function changes by $O(A)$ over distance $O(\delta)$ in the
  $n$-direction, so
  $\partial_n f_\lambda \sim A/\delta \sim |\Delta\mapsec|/\sqrt\lambda$
  and
  $\partial_n^2 f_\lambda \sim A/\delta^2 \sim |\Delta\mapsec|/\lambda$.
  While for the tangential derivatives
  along $\ambig$, the jump amplitude $|\Delta\mapsec|$ and the smooth
  branches $\mapsec^\pm$ vary on a scale $\ell$ independent of $\lambda$
  (set by the geometry of $\ambig$ and the prior $p$).  The tangential
  derivatives of $f_\lambda$ inherit this smooth variation by
  $\Delta_{\ambig_n} f_\lambda \sim A/\ell^2 \sim O(1)$.

  Finally, substituting into \eqref{eq:el_manifold} via \eqref{eq:lb_fermi},
  the $\lambda\,\partial_n^2$ term is $O(1)$ (same order as $f_\lambda$ and
  $\mapsec$), while $\lambda\,H\,\partial_n f_\lambda = O(\sqrt\lambda)$
  and $\lambda\,\Delta_{\ambig_n} f_\lambda = O(\lambda)$ are lower order,
  yielding \eqref{eq:el_1d}.
  Outside the layer, $f_\lambda$ solves
  $f_\lambda - \lambda\,\Delta_{\mathcal{M}} f_\lambda = \mapsec$ with
  a smooth right-hand side and all terms $O(1)$; by standard elliptic
  regularity, $f_\lambda \to \mapsec$ pointwise as $\lambda\to 0$.
\end{proof}

\Cref{prop:bl} shows that the full manifold minimiser
is, to leading order, determined by the one-dimensional equation
$f - \lambda f'' = \mapsec$ in the normal coordinate $n$. \Cref{app:tikhonov_minimiser} solves this one-dimensional problem on $\bbR$ and shows
that $f_\lambda = \mapsec \ast K_\lambda$.

Working on $\bbR$ rather than
a bounded interval $(-\delta_0,\delta_0)$ is justified because the
Fermi chart extends to a fixed $\delta_0 > 0$ independent of $\lambda$,
while the transition layer has width $O(\sqrt\lambda) \ll \delta_0$.
The Green's function on $\bbR$ and on $(-\delta_0,\delta_0)$ with
Neumann conditions differ by an
exponentially small correction $O(e^{-\delta_0/\sqrt\lambda})$ in the transition layer.

\subsection{Minimiser of One-Dimensional Tikhonov Functional}
\label{app:tikhonov_minimiser}

Let $\mapsec\in L^2(\bbR;\bbR)$ and consider the one-dimensional
functional over $f \in H^1(\bbR;\bbR)$:
\[
  L_{1\mathrm{d}}(f) \;=\; \int_{\bbR}
  |f(x)-\mapsec(x)|^2 + \lambda|f'(x)|^2 \,\mathrm{d}x
\]
which is the effective functional of restricting to the normal $n$, as justified in
\cref{app:fermi_reduction}.

\begin{proposition}[1D convolution formula]
  \label{prop:1d_conv}
  The unique minimiser of $L_{1\mathrm{d}}$ is
  \begin{equation}\label{eq:conv_minimiser}
    f_\lambda \;=\; \mapsec \ast K_\lambda,
    \qquad
    K_\lambda(x) \;=\; \frac{1}{2\sqrt{\lambda}}\,e^{-|x|/\sqrt{\lambda}}.
  \end{equation}
\end{proposition}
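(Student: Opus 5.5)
The plan is to prove existence and uniqueness of the minimiser directly from convexity, and then identify it through the Fourier transform.

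\emph{Existence and uniqueness.} For $\lambda>0$, the functional $L_{1\mathrm{d}}$ is finite on $H^1(\bbR)$ because $\mapsec\in L^2$. It is continuous and strictly convex there: the fidelity term $\|f-\mapsec\|_{L^2}^2$ is already strictly convex in $f$. It is also coercive, since $L_{1\mathrm{d}}(f)\ge \tfrac12\min(1,\lambda)\|f\|_{H^1}^2-\|\mapsec\|_{L^2}^2$. The direct method on the Hilbert space $H^1$ therefore gives a unique minimiser. As in \cref{lem:el}, its first variation yields the weak Euler--Lagrange equation
\[
\int_{\bbR}\bigl(f_\lambda-\mapsec\bigr)\eta+\lambda f_\lambda'\eta'\,\mathrm dx=0\quad\forall\eta\in H^1,
\]
that is, $f_\lambda-\lambda f_\lambda''=\mapsec$ in the distributional sense on $\bbR$. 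No boundary terms arise on the whole line.

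\emph{Identification of the solution.} The cleanest step is Plancherel. Writing $\hat f(\xi)$ for the Fourier transform, we have
\[
L_{1\mathrm{d}}(f)=\frac{1}{2\pi}\int_{\bbR}\Bigl(|\hat f-\hat{\mapsec}|^2+\lambda\xi^2|\hat f|^2\Bigr)\mathrm d\xi.
\]
This functional can be minimised pointwise in $\xi$. Completing the square gives $\hat f_\lambda(\xi)=\hat{\mapsec}(\xi)/(1+\lambda\xi^2)$. The multiplier is bounded by $1$, and $\xi\hat f_\lambda$ lies in $L^2$ because $|\xi|/(1+\lambda\xi^2)\le 1/(2\sqrt\lambda)$. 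Hence this $f_\lambda$ is in $H^1$, and it is the minimiser.

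It remains to compute $\hat K_\lambda$. For $a=1/\sqrt\lambda$, the elementary integral $\int_{\bbR}\tfrac a2 e^{-a|x|}e^{-ix\xi}\,\mathrm dx = a^2/(a^2+\xi^2)=1/(1+\lambda\xi^2)$ follows by splitting into $x>0$ and $x<0$. Since $K_\lambda\in L^1$ and $\mapsec\in L^2$, Young's inequality gives $\mapsec\ast K_\lambda\in L^2$ with Fourier transform $\hat K_\lambda\hat{\mapsec}$. Fourier injectivity on $L^2$ then gives $f_\lambda=\mapsec\ast K_\lambda$.

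As a sanity check, one can verify directly that $K_\lambda$ is the Green's function of $1-\lambda\partial_x^2$. Its second derivative is $K_\lambda''=\lambda^{-1}K_\lambda-\lambda^{-1}\delta_0$, because the derivative jumps by $-1/\lambda$ at $0$. Consequently $K_\lambda-\lambda K_\lambda''=\delta_0$, and convolving with $\mapsec$ recovers the Euler--Lagrange equation.

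\emph{Main obstacle.} There is no deep difficulty here. The only point requiring care is functional-analytic bookkeeping: justifying the Plancherel rewriting, and checking that the pointwise-in-$\xi$ minimiser actually lies in $H^1$. I would handle both through the bound on $|\xi|/(1+\lambda\xi^2)$ above, so that the candidate is admissible and uniqueness from strict convexity closes the argument. Uniqueness can also be seen directly: the difference of two minimisers solves $u-\lambda u''=0$ in $H^1$, so $(1+\lambda\xi^2)\hat u=0$ and hence $u=0$.
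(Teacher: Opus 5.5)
Your proposal is correct and follows essentially the same route as the paper: Plancherel, pointwise minimisation in $\xi$ giving $\hat f_\lambda=\hat{\mapsec}/(1+\lambda\xi^2)$, and identification of $K_\lambda$ as the inverse transform of that multiplier. The extra bookkeeping you supply fills in details the paper leaves implicit: the admissibility bound $|\xi|/(1+\lambda\xi^2)\le 1/(2\sqrt{\lambda})$, Young's inequality for $\widehat{\mapsec\ast K_\lambda}=\hat K_\lambda\hat{\mapsec}$, and a Plancherel constant $1/(2\pi)$ consistent with your transform convention.
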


\begin{proof}
  Taking the Fourier transform and applying Parseval's identity:
  \[
    L_{1\mathrm{d}}(f) \;=\; \int_{\bbR}
    |\hat{f}(\xi)-\hat{\mapsec}(\xi)|^2
    + \lambda\,\xi^2|\hat{f}(\xi)|^2 \,\mathrm{d}\xi.
  \]
  At each frequency $\xi$, the integrand is a strictly convex quadratic in
  $\hat f(\xi)$. The minimizer $\hat{f}_\lambda = \hat{\mapsec}\cdot(1+\lambda\xi^2)^{-1}$ becomes a convolution upon inverting:
  $f_\lambda = \mapsec \ast K_\lambda$, where
  \[
    K_\lambda(x)
    \;=\; \mathcal{F}^{-1}\!\!\left[\frac{1}{1+\lambda\xi^2}\right]\!(x)
    \;=\; \frac{1}{2\sqrt{\lambda}}\,e^{-|x|/\sqrt{\lambda}}.
  \]
\end{proof}

\begin{remark}
  Notice that the squared $L^2$ norm of $\mapsec$ is a constant to $f$ and $L_{1\mathrm{d}}(f)$ thus only depends on the linear action of $\mapsec$. Therefore, we can relax $g$ to be in the $H^{-1}(\bbR;\bbR)$ that will come in handy in the proof of \cref{prop:jac}.
\end{remark}

\subsection[Proof of Jacobian Blow-Up]{Proof of \cref{prop:jac} (Jacobian Blow-Up)}
\label{app:jac_proof}

\paragraph{Decomposition near a jump.}
Let $c_0 \in \ambig$ with jump $\Delta\mapsec = \mapsec^+ - \mapsec^-$.
Under the normal coordinate $n$ centred at $c_0$ (so that $c_0$
corresponds to $n=0$), we can write
$\mapsec(n) = \mapsec_{\mathrm{sm}}(n) + \Delta\mapsec\cdot H(n)$,
where $\mapsec_{\mathrm{sm}}$ is the $C^1$ part and $H$ is the Heaviside
function.
By linearity of convolution (\cref{prop:1d_conv}),
$
f_\lambda(n) \;=\; (\mapsec_{\mathrm{sm}} \ast K_\lambda)(n)
\;+\; \Delta\mapsec \cdot F_\lambda(n)
$
where
\[
  F_\lambda(s) \;=\; \int_{-\infty}^{s} K_\lambda(u)\,\mathrm{d}u
  \;=\;
  \begin{cases}
    \tfrac{1}{2}\,e^{s/\sqrt{\lambda}} & s \le 0, \\[2pt]
    1 - \tfrac{1}{2}\,e^{-s/\sqrt{\lambda}} & s \ge 0.
  \end{cases}
\]

\paragraph{First order derivative.}
Since $F_\lambda' = K_\lambda$, we have
\begin{equation}\label{eq:fderiv}
  f_\lambda'(n) \;=\; (\mapsec_{\mathrm{sm}}' \ast K_\lambda)(n)
  \;+\; \Delta\mapsec \cdot K_\lambda(n).
\end{equation}
Taking $n = 0$ yields
$  f_\lambda'(0) = (\mapsec_{\mathrm{sm}}' \ast K_\lambda)(0)
+ \Delta\mapsec \cdot K_\lambda(0)$.
The first term is $O(1)$ as $\lambda\to 0$ (smooth part), while the second
term $
\Delta\mapsec \cdot K_\lambda(0)
= \frac{\Delta\mapsec}{2\sqrt{\lambda}}
\longrightarrow\infty
$.
Hence $\|f_\lambda'(0)\| \sim |\Delta\mapsec|/(2\sqrt{\lambda})$.
By \cref{prop:bl}, the normal derivative of the full manifold
minimiser equals $f_\lambda'(0)$ to leading order,
proving \cref{prop:jac}.
Away from $\ambig$ (i.e.\ $|n| \gg \sqrt\lambda$),
$K_\lambda(n) \to 0$ exponentially, so $f_\lambda'(n)\to \mapsec'(n)$
pointwise. We showcase this blow-up behavior in \cref{fig:app_tikhonov}.

\paragraph{Error-jacobian near the jump.} The error satisfies
\begin{equation}\label{eq:fgerror}
  f_\lambda(n) - \mapsec(n) = \mapsec'' \mathcal{O}(\lambda) + \Delta \mapsec \cdot \sqrt{\lambda} K_\lambda(n)
\end{equation}
where the first term is of higher order compared to the second. Thus, comparing \cref{eq:fderiv} and \cref{eq:fgerror}, we have
$
| f_\lambda(n) - \mapsec(n) | = \sqrt{\lambda} |f_\lambda'(n)| + \mathcal{O}(\sqrt{\lambda})
$.

\begin{figure}[htbp]
  \centering
  \includegraphics[width=\linewidth]{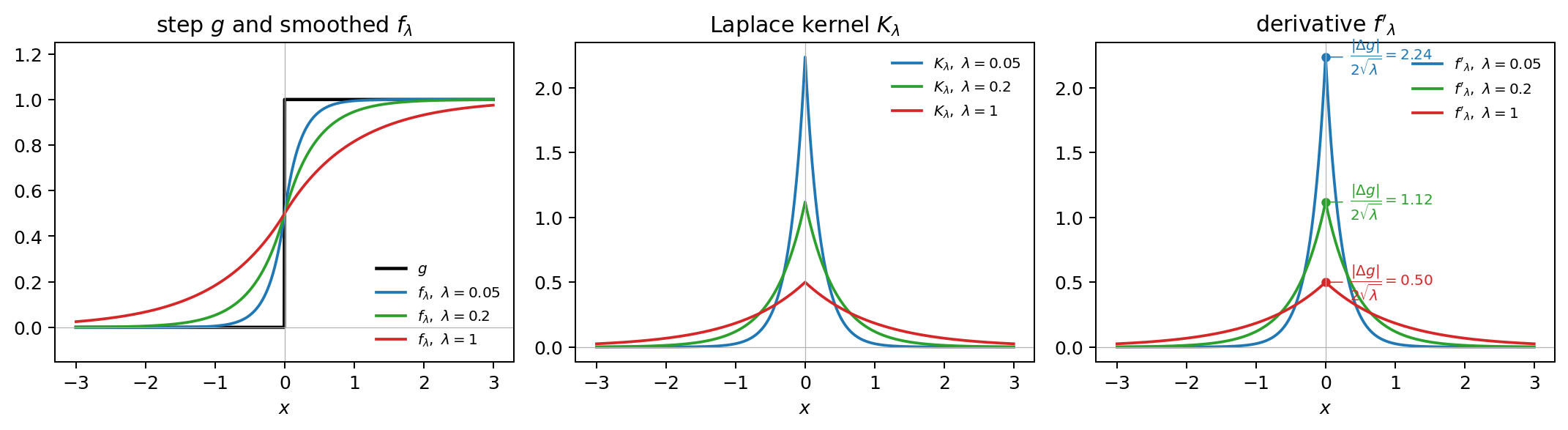}
  \caption{One-dimensional Tikhonov model.  As $\lambda \to 0$, $f_\lambda$
    sharpens toward $g$ (step function) and the Jacobian $\|Df_\lambda\|$ at the
  jump grows as $|\Delta g|/(2\sqrt{\lambda})$.}
  \label{fig:app_tikhonov}
\end{figure}




\subsection[Proof of Curvature Blow-Up]{Proof of \cref{prop:kink_curv} (Curvature Blow-Up)}
\label{app:curve}

We prove the local one-dimensional statement.  Write
$r:=\sqrt{\lambda}$, so that \[ K_\lambda(n)=\frac{1}{2r}e^{-|n|/r}.\]
Since the claim is local near the kink, we may either work on a bounded
normal interval with Neumann boundary conditions or multiply the model by a
smooth cutoff.
Let
\[\gamma_0(n)=
  \begin{cases}
    \gamma_-(n), & n<0,\\
    \gamma_+(n), & n>0,
  \end{cases}\qquad\gamma_-(0)=\gamma_+(0),
\]
with $\gamma_\pm\in C^2$ near $0$.  Denote
\[
  v_-:=\gamma_-'(0),\qquad
  v_+:=\gamma_+'(0),\qquad
  \Delta v:=v_+-v_-,
  \qquad
  v_m:=\frac{v_-+v_+}{2}.
\]
The one-dimensional Tikhonov smoothing is $\gamma_\lambda = K_\lambda * \gamma_0$.

We first compute the derivatives of $\gamma_\lambda$ at the kink.
Since $\gamma_0'$ has a jump discontinuity of size $\Delta v$ at $0$,
we may write, in a neighbourhood of $0$,
\[
  \gamma_0'(n)
  =
  v_-+\Delta v\,H(n)+\rho(n),
\]
where $H$ is the Heaviside function and $\rho(n)=O(|n|)$ as $n\to 0$.
Therefore
\[
  \gamma_\lambda'(n)
  =
  K_\lambda*\gamma_0'(n)
  =
  v_-+\Delta v\,F_\lambda(n)+O(r),
\]
where
\[
  F_\lambda(n)
  :=
  \int_{-\infty}^{n}K_\lambda(s)\,ds
  =
  \begin{cases}
    \frac12 e^{n/r}, & n\le 0,\\[2pt]
    1-\frac12 e^{-n/r}, & n\ge 0.
  \end{cases}
\]
In particular, $\gamma_\lambda'(0)= v_-+\frac12\Delta v+O(r)=v_m+O(r).$
Next, differentiating the previous expression gives the leading second
derivative:
\[
  \gamma_\lambda''(n)
  =
  \Delta v\,K_\lambda(n)+O(1).
\]
Equivalently, at the kink,
\[
  \gamma_\lambda''(0)
  =
  \frac{\Delta v}{2r}+O(1)
  =
  \frac{\Delta v}{2\sqrt{\lambda}}+O(1).
\]
The $O(1)$ term comes from the bounded classical second derivatives of
$\gamma_\pm$ and from the local cutoff or boundary correction. For a regular Euclidean curve, the Frenet curvature with arbitrary
parameter $n$ is
\[\kappa_{\gamma_\lambda}(n)=\frac{
  \|P_{\gamma_\lambda'(n)^\perp}\gamma_\lambda''(n)\|}
  {\|\gamma_\lambda'(n)\|^2},
\]
where
\[
  P_{w^\perp}z:= z-\frac{\langle z,w\rangle}{\|w\|^2}w
\]
is the orthogonal projection onto the complement of a nonzero vector $w$.
Assume $v_m\neq 0$.  Since
\[
  \gamma_\lambda'(0)=v_m+O(r), \qquad
  \gamma_\lambda''(0)=\frac{\Delta v}{2r}+O(1),
\]
the projection operator satisfies $P_{\gamma_\lambda'(0)^\perp} = P_{v_m^\perp}+O(r)$,
and hence
\[
  P_{\gamma_\lambda'(0)^\perp}\gamma_\lambda''(0)
  =
  \frac{P_{v_m^\perp}\Delta v}{2r}+O(1).
\]
Also $\|\gamma_\lambda'(0)\|^2 = \|v_m\|^2+O(r)$. Therefore
\[\kappa_{\gamma_\lambda}(0)=\frac{\|P_{v_m^\perp}\Delta v\|}
  {2r\,\|v_m\|^2}+O(1)
  =\frac{\|P_{v_m^\perp}\Delta v\|}
  {2\sqrt{\lambda}\,\|v_m\|^2}+O(1).
\]
If $P_{v_m^\perp}\Delta v\neq 0$, the leading term is nonzero, so the
curvature blows up at the smoothed kink with order $\lambda^{-1/2}$. More generally, for $|n|=O(r)$ one obtains the local profile
\[
  \kappa_{\gamma_\lambda}(n) =\frac{ K_\lambda(n)
  \bigl\|P_{v(n)^\perp}\Delta v\bigr\|}
  {\|v(n)\|^2} +O(1),
  \qquad
  v(n):=v_-+\Delta v\,F_\lambda(n).
\]
Thus the blow-up is localized to a normal layer of width
$O(\sqrt{\lambda})$ around the tangent discontinuity.

\section{Discontinuity in Prior-Mode: a Log-Gaussian Mixture Example}
\label{sec:gaussian-mixture}

We illustrate the emergence of the singularity in prior-mode $\mapsec$ via a multi-mixture gaussian example. The parameter space is $E:=\mathbb{R}_+^d$ with coordinates $\theta = (r_1, \dots, r_{d-1}, z)$ where $r_i > 0$ is the reflectance in channel $i$ and $z > 0$: illumination intensity. The rendering map $\pi: \mathbb{R}_+^d \to \mathbb{R}_+^{d-1}$ maps $(r_1, \dots, r_{d-1}, z)$ to $(r_1 z, \dots, r_{d-1} z)$.

\begin{proposition}
  \label{prop:2-mixture-gaussian}
  Assuming the prior distribution is a two-component mixture of log-Gaussians with equal weights
  $$p(r,z) := w_A \text{LogNormal}(\mu_A, \Sigma_A) + w_B \text{LogNormal}(\mu_B, \Sigma_B),$$
  where $w_A+w_B=1$ are positive weights, $\mu_A, \mu_B \in \mathbb{R}^d$ are the means in log-space and $\Sigma_{A,B} \in \mathbb{R}^{d \times d}$ are positive-definite covariance matrices. Then, the ambiguity locus $\Sigma$ is the level set of a up-to-second-order-form $\Phi$ via
  $\Phi(\log x) = \text{const}$
  and thus $\Sigma$ has co-dimension 1.
\end{proposition}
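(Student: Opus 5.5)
We work in log coordinates, where the rendering map becomes linear. Write $\ell = \log\theta = (\log r_1,\dots,\log r_{d-1},\log z)\in\bbR^d$ and $y=\log c\in\bbR^{d-1}$. Then $\proj$ reads $y_i=\ell_i+\ell_d$. Each fiber $\proj^{-1}(c)$ is therefore the affine line $\ell = \ell_0(y)+t\,u$, with $u=(-1,\dots,-1,1)$ and $\ell_0(y)=(y,0)$, parametrised by the log-illumination $t=\log z$.

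\textbf{Step 1: one Gaussian on a fiber.} For a single component $k\in\{A,B\}$, the log-density of $\ell$ is the quadratic $-\tfrac12(\ell-\mu_k)^\top\Sigma_k^{-1}(\ell-\mu_k)$ plus a constant. The change of variables from $\theta$ to $\ell$ contributes the Jacobian factor $\prod\theta_i^{-1}$, i.e.\ the linear term $-\sum\ell_i$. We keep this term explicitly, which is why $\Phi$ is only claimed to be up to second order.

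Restricted to the fiber, the component's log-density is a concave quadratic in $t$ with leading coefficient $-\tfrac12 u^\top\Sigma_k^{-1}u<0$. Maximising over $t$ gives a unique per-component mode $t_k(y)$, which is affine in $y$. The maximal value $M_k(y):=\max_t \log p_k$ is a quadratic function of $y$ plus $\log w_k$ and the normalisation constant. Concretely, $M_k(y)=-\tfrac12 (y-m_k)^\top S_k (y-m_k)+\text{lin}_k(y)+\text{const}_k$, where $S_k$ is the Schur complement of $\Sigma_k^{-1}$ along $u$, and $S_k$ is positive semidefinite.

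\textbf{Step 2: the mixture mode switches where $M_A=M_B$.} The mixture density on the fiber is $w_Ae^{\cdot}+w_Be^{\cdot}$, not the maximum of the two components. To identify the ambiguity locus with a tie between the two maxima, I would use a separated-modes regime: assume $\|\mu_A-\mu_B\|$ is large relative to $\Sigma_k^{1/2}$. Then on each fiber the mixture has two local maxima, located at $t_k(y)+o(1)$, with values $M_k(y)+o(1)$ (a Laplace-type approximation). The global argmax $\mapsec$ jumps exactly where the two heights tie. In the leading-order approximation this locus is
\[\Sigma=\{y:\Phi(y):=M_A(y)-M_B(y)=0\}.\]
Here $\Phi$ is a polynomial of degree at most two in $y=\log c$, so $\Phi(\log x)=\mathrm{const}$ as claimed, with the constant absorbing $\log(w_A/w_B)$ and the normalisations.

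\textbf{Step 3: codimension one.} By the regular value theorem, $\Sigma$ is a smooth hypersurface wherever $\nabla\Phi\neq0$ on it. Since $\nabla\Phi$ is affine in $y$, its zero set is an affine subspace. The level set can fail to be regular only if the constant happens to equal the critical value of $\Phi$, which is a nongeneric choice of $(w,\mu,\Sigma)$. The degenerate case $\Phi\equiv$ const requires $S_A=S_B$ and matching linear parts, which is also nongeneric. Generically, then, $\Sigma$ has codimension one. Note that $\Phi$ has nonzero linear part whenever $\mu_A,\mu_B$ project differently to image space, which is what makes $\Sigma$ nonempty.

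\textbf{Main obstacle.} The hardest point is Step 2: justifying that the exact mixture's argmax switches along $\{M_A=M_B\}$, rather than along a perturbation of it, and that it jumps rather than moving continuously. I would first prove the tie statement exactly in the well-separated limit. I would then argue that the true Maxwell set is a $C^1$-small perturbation of $\{\Phi=0\}$ by applying the implicit function theorem to $F(y)=\log p(\ell_0+t_A^*u)-\log p(\ell_0+t_B^*u)$, where $t_A^*,t_B^*$ are the exact local maxima. The exact set is then still a codimension-one level set, and the quadratic form $\Phi$ is its second-order model.
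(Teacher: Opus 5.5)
Your Steps 1 and 3 follow the paper's proof. You pass to log coordinates, where $\proj$ is additive, and restrict each component to the fiber line $(y,0)+t\,u$, getting a one-dimensional Gaussian with affine mode $t_k(y)$ and quadratic log-peak $M_k(y)$. You then read off $\ambig$ as a level set of $\Phi=M_A-M_B$, regular except at its single possible critical value. The only computational difference is the Jacobian. The paper uses the factor $1/z$ of the conditional density $p(\illum\mid c)$. You use the factor $z^{d-2}/\prod_i x_i$ of the joint density evaluated on the fiber, which is what the definition of $\mapsec$ literally says. Either choice changes only the affine part of $\Phi$. The paper never justifies identifying the jump set of the mixture's argmax with the tie of the per-component peaks; it simply declares the equal-peak condition to be the ambiguity. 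Your Step 2 therefore goes beyond the paper, and that is where the proposal fails.

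Large $\|\mu_A-\mu_B\|$ does not give two separated maxima on the fibers near the tie set. What matters is the fiber separation $|t_A(y)-t_B(y)|$ relative to the fiber widths $(u^\top\Sigma_k^{-1}u)^{-1/2}$. Take $\Sigma_A=\Sigma_B=\Sigma$ and $u^\top\Sigma^{-1}(\mu_A-\mu_B)=0$, with $\|\mu_A-\mu_B\|$ as large as you like. Then $t_A(y)=t_B(y)$ for every $y$, and both components are multiples of the same Gaussian in $t$ on each fiber. The fiber density is therefore unimodal and $\mapsec$ is smooth (indeed $\log\mapsec$ is affine in $\log x$), so there is no ambiguity locus at all. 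Yet $\{\Phi=\text{const}\}$ is a genuine hyperplane. The linear part of $\Phi$ is given by the first $d-1$ entries of $\Sigma^{-1}(\mu_A-\mu_B)$, and these vanish together with $u^\top\Sigma^{-1}(\mu_A-\mu_B)$ only if $\mu_A=\mu_B$. In particular, your closing remark that a nonzero linear part makes $\ambig$ nonempty is also false.

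With unequal covariances, $t_A-t_B$ is affine in $y$ and vanishes on a hyperplane. Wherever that hyperplane meets the tie quadric, the two fiber maxima coalesce and there is no jump, again regardless of $\|\mu_A-\mu_B\|$. What your implicit-function argument actually delivers is local. On the open part of $\{\Phi=\text{const}\}$ where $|t_A-t_B|$ spans many fiber widths, the exact jump set is a $C^1$-small perturbation of the quadric. When $\Sigma_A=\Sigma_B$ it coincides with the hyperplane exactly, by the symmetry of two equal-width, equal-height bumps, if the (constant) fiber separation exceeds two widths; otherwise it is empty. In general it terminates where the two maxima merge, at cusp points of the family. That still gives codimension one, but with $\ambig$ an open piece of the level set rather than the whole level set. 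Both the statement and the paper's proof omit this qualification.
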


\begin{proof}
  Let $s_i = \log r_i$, $u = \log z$, $y_i = \log x_i$. The rendering map becomes additive
  $\tilde{\pi}(s_1, \dots, s_{d-1}, u) = (s_1 + u, \dots, s_{d-1} + u)$ and the prior in log-space is thus a Gaussian mixture
  $\tilde{p}(s, u) = w_A \mathcal{N}(\mu_A, \Sigma) + w_B \mathcal{N}(\mu_B, \Sigma)$.

  For any given log-transformed image $y = (y_1, \dots, y_{d-1}) \in \mathbb{R}^{d-1}$, the fiber is $\pi^{-1}(u) = \hat{y} + uv$, where $\hat{y} := (y, 0) \in \mathbb{R}^d$ and $v := (-1, \dots, -1, 1) \in \mathbb{R}^d$ is the fiber direction.
  The condition probability along the fiber reads
  $$\sum_{k=A,B} p_k(u \mid y) = \sum_{k=A,B}\frac{w_k}{(2\pi)^{d/2}|\Sigma_k|^{1/2}} \exp\!\left(-\frac{1}{2}(a_k + uv)^T M_k (a_k + uv)\right)$$
  with $a_k(y) := \hat{y} - \mu_k$ and $M_k := \Sigma_k^{-1}$.
  Each component is a 1D Gaussian with
  variance $\tilde{\sigma}_k^2 = \frac{1}{\alpha_k}$, center $u_k^*(y) = -\frac{\beta_k}{\alpha_k}$, and peak height $h_k^{(\log)} = w_k (2\pi)^{-d/2}|\Sigma_k|^{-1/2} \exp\!\left(-\frac{R_k^2(y)}{2}\right)$
  where $\alpha_k = v^T M_k v$, $\beta_k(y) = v^T M_k a_k$, and $R_k^2(y) = a_k^T M_k a_k - \beta_k^2/\alpha_k = a_k^T P_k^\perp a_k$ is the squared Mahalanobis distance from $\mu_k$ to the fiber, with $P_k^\perp = M_k - \frac{M_k vv^T M_k}{\alpha_k}$.

  In the $\mathbb{R}_+$ space, the Jacobian $1/z = e^{-u}$ shifts the mode to $u_k^{**} = u_k^* - \tilde{\sigma}_k^2$ and gives
  $$\log h_k^{(\mathbb{R}_+)} = \log w_k - \frac{1}{2}\log|\Sigma_k| - \frac{R_k^2(y)}{2} - u_k^*(y) + \frac{\tilde{\sigma}_k^2}{2} + C$$
  where $C$ is a universal constant.
  Then, the equal-peak condition (raising ambiguity) amounts to setting $\log h_A^{(\mathbb{R}_+)} = \log h_B^{(\mathbb{R}_+)}$, leading to
  $$\underbrace{\frac{1}{2}\bigl(R_A^2(y) - R_B^2(y)\bigr) + \bigl(u_A^*(y) - u_B^*(y)\bigr)}_{\Phi(y)} = \underbrace{\log\frac{w_A}{w_B} + \frac{1}{2}(\tilde{\sigma}_A^2 - \tilde{\sigma}_B^2) + \frac{1}{2}\log\frac{|\Sigma_B|}{|\Sigma_A|}}_{\phi_{A,B}}$$
  Here, $\phi_{A,B}$ depends on the mixture setup only and is independent of the probing location, so it is a constant in $y$. Thus, the geometry of the ambiguous locus solely depends on $\Phi(y)$, which is an up-to-second-order-form of $y$ since $\frac{R_k^2(y)}{2}$ is quadratic and $u_k^*(y)$ is linear in $y$. Then, under mild conditions that the coefficients are not degenerate, the level set of $\Phi(y)$ is either a conic hypersurface or a hyperplane, which has co-dimension 1.
\end{proof}

We illustrated the a specific case in \cref{fig:locus-calculated}. \Cref{prop:2-mixture-gaussian} can be readily applied to multi-log-gaussian-mixture case where the ambiguity is of co-dimension 1 locally, except on a null set where the conditional prior probability attains equal values for more than two mixtures.

\begin{figure}[htbp]
  \centering
  \includegraphics[width=0.8\textwidth]{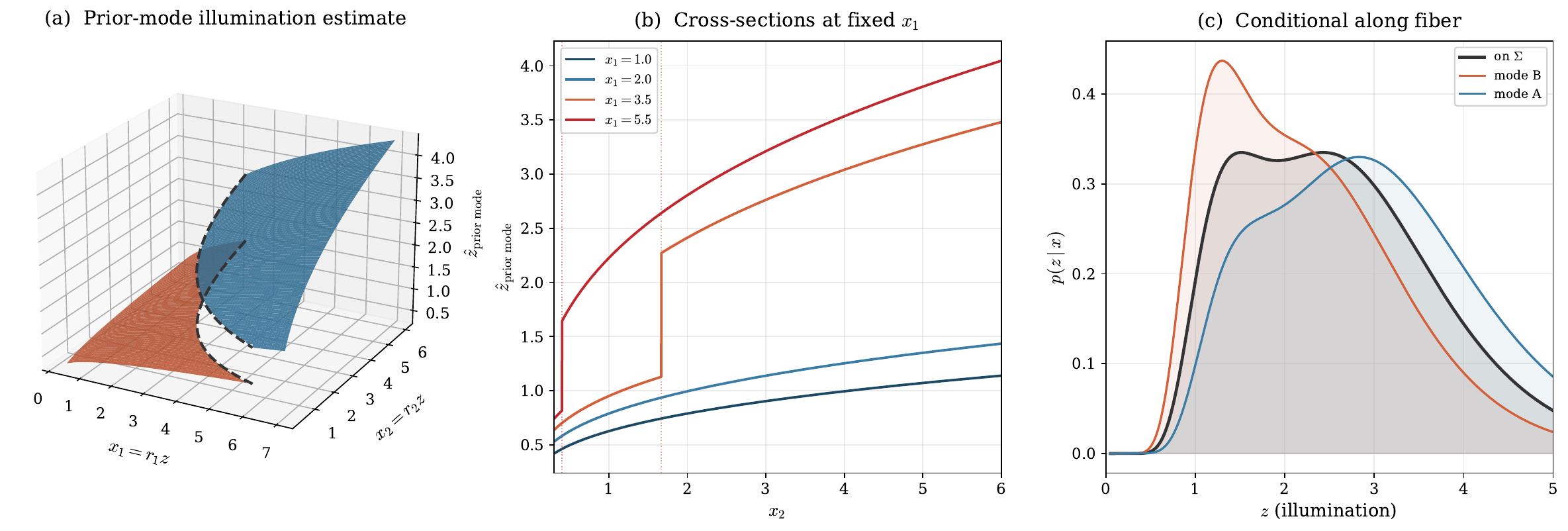}
  \caption{
    Ambiguous locus $\Sigma$ in $\mathbb{R}_+$ image space for a two-component log-Gaussian mixture prior. The two prior modes $\mu_A=(0.35, 0.40,  1.75)$ (blue) and $\mu_B=(-0.05, 0.75, -0.40)$ (orange) are in log-space with widths $\sigma_A=\sigma_B=0.55$. (a) Prior-mode illumination surface $\hat{i}_\mathrm{prior\ mode}(x)$, colored by dominant mode; the dashed curve is $\pi^{-1}(\Sigma)$, where both modes attain equal peak height. (b) Cross-sections of $\hat{i}_\mathrm{prior\ mode}$ at four fixed values of $x_1$; dotted verticals mark the $\Sigma$ crossing for each slice. (c) Conditional density $p(i\mid x)$ along the fiber at $x_1=e$, evaluated on $\Sigma$ (gray) and at offsets $\pm 0.3$ in $\log x_2$ toward mode B (orange) and mode A (blue).
  }
  \label{fig:locus-calculated}

\end{figure}
\section{Case Studies and Supplementary Images}
\subsection{CCT-sensitivity and Albedo Prediction Error}
\label{careaga}

\begin{figure}[p]
  \centering
  \includegraphics[width=\linewidth]{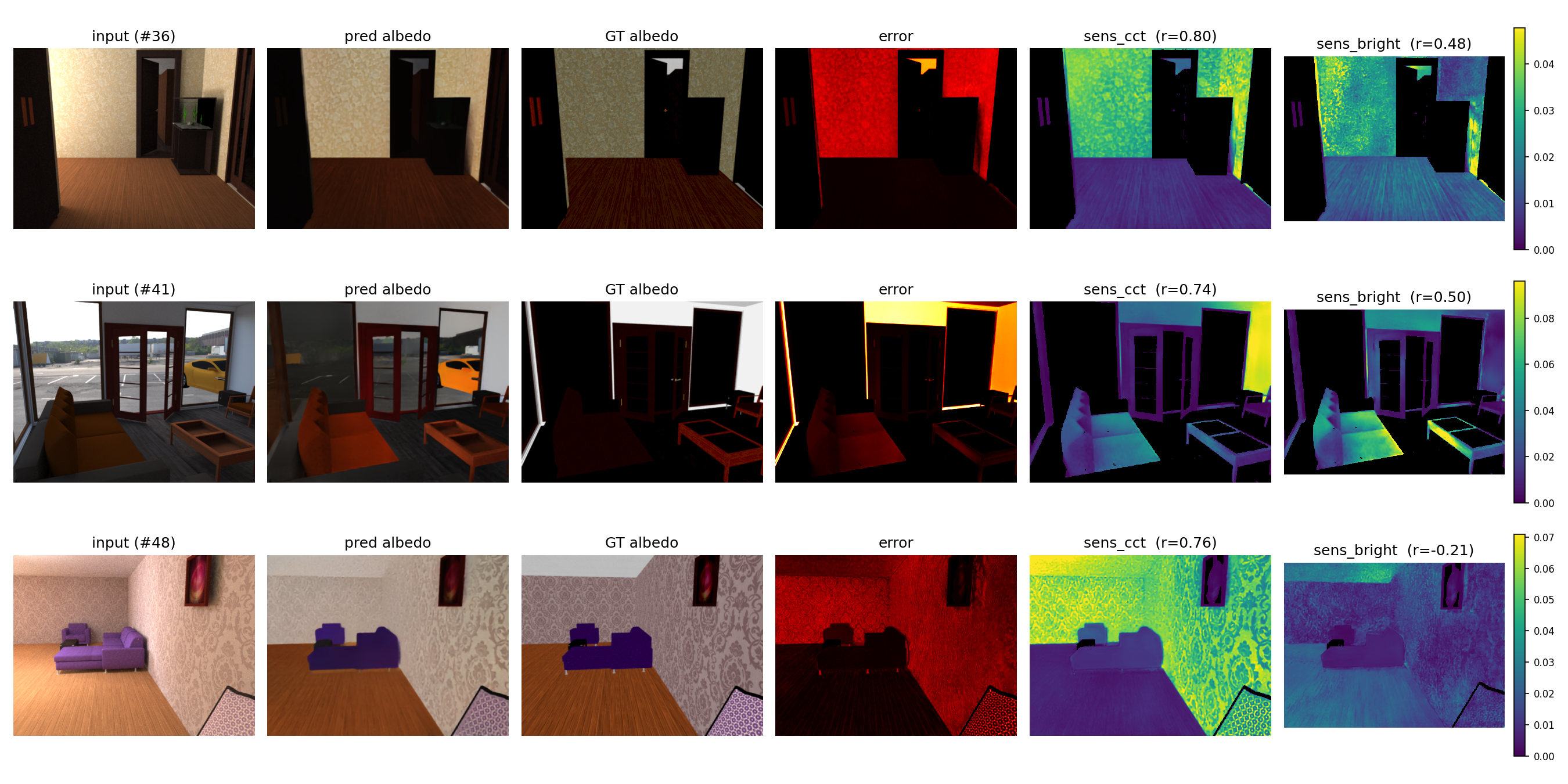}
  \caption{Per-pixel CCT-sensitivity heatmaps for high-$r_{\cct}$
  CGIntrinsics scenes. From left to right: input image, predicted albedo by Careaga DPT, ground truth albedo, per-pixel error norm, sensitivity under CCT perturbation $\|\partial \refl / \partial \cct\|$, and sensitivity under brightness perturbation. }
  \label{fig:heatmaps2}
\end{figure}

\begin{figure}[p]
  \centering
  \includegraphics[width=0.8\linewidth]{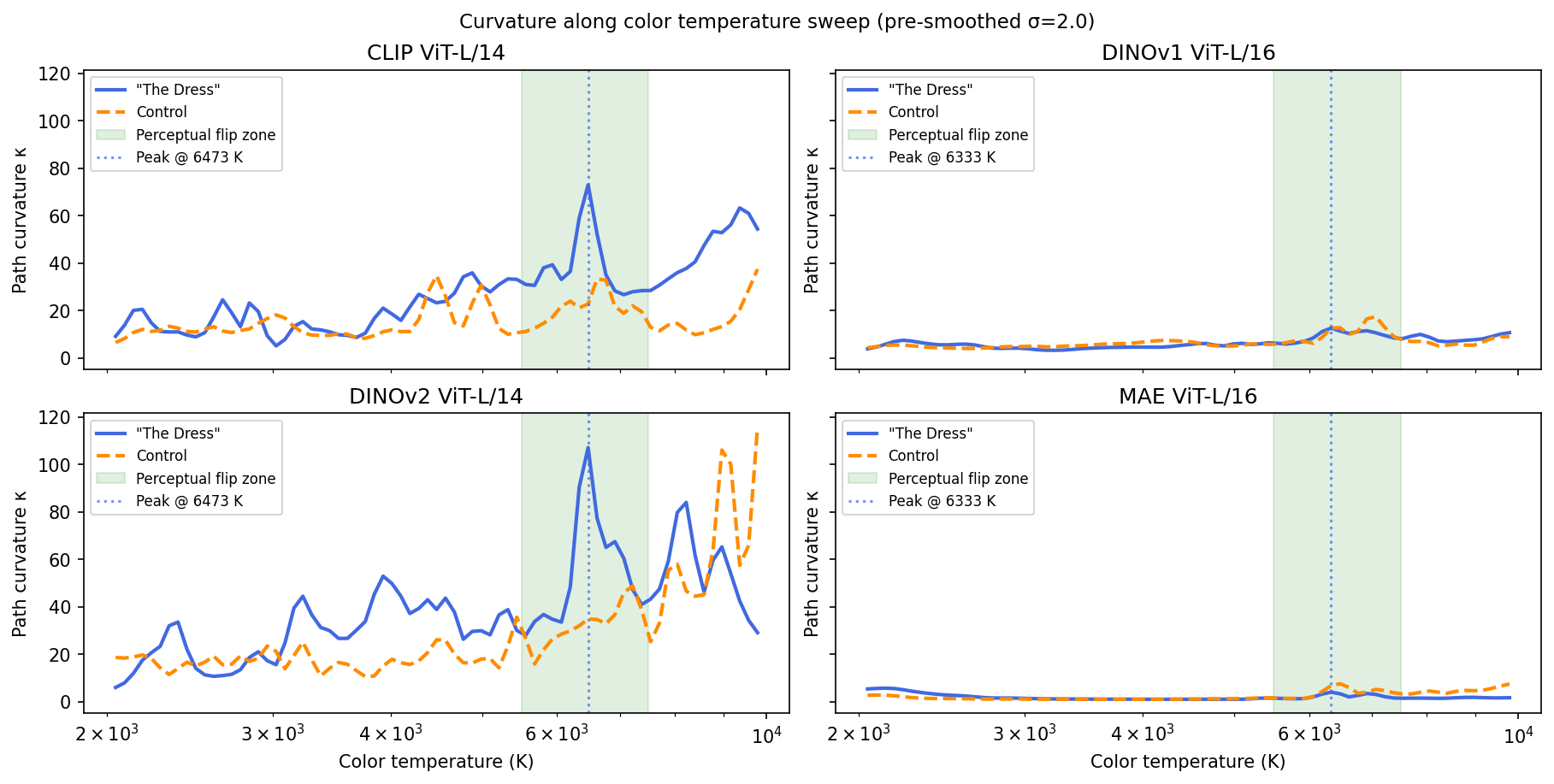}
  \caption{%
    The discrete Frenet-Serret curvature along latent path encoded by four vision models (labeled on top of the corresponding chart). The y-range is shared across all sub-figures; they are comparable since the length of the latent vector is normalized so the curvature is no longer sensitive to the scale of the embedding domain.
  }
  \label{fig:dress_curvature_full}
\end{figure}

Here, we showcase three testing cases and discuss why the CCT-perturbation is more pronounced than the brightness and saturation perturbation.
\Cref{fig:heatmaps2} shows representative per-pixel sensitivity heatmaps
for high-$r_{\cct}$ scenes: sensitivity concentrates on surfaces with
low chromaticity and low texture, precisely the regions where the
albedo-illumination decomposition is most underdetermined.

\subsection{Latent Curvature of The Dress and Control}
\label{app:case_studies}

We probe \textit{The Dress} and the control image in three more models:
DINOv1 ViT-L/16~\citep{caron2021emerging}, DINOv2 ViT-L/14~\citep{oquab2023dinov2},
and MAE ViT-L/16~\citep{he2022masked}. These span the dominant
self-supervised objectives for vision transformers: DINOv1 trains by
self-distillation, matching the softmax outputs of a student and an
exponential-moving-average teacher across augmented crops; DINOv2 scales
this with an added masked-token (iBOT) objective on curated data; and MAE
is a masked autoencoder trained to reconstruct pixels of occluded patches
under a reconstruction loss.

Each objective in Appendix~\ref{app:loss_taxonomy} is a proper loss whose population minimiser is the prior-mode
section $\mapsec$ (\cref{app:loss_taxonomy}), so each encoder approximates
$\mapsec$, and the implicit smoothing of training (weight decay,
augmentation, finite capacity) plays the role of the regulariser $\lambda$.
\Cref{prop:jac} then predicts the same consequence for every model: where
the colour-temperature sweep crosses the ambiguity locus $\ambig$, the
section jumps by $\Delta\mapsec$ and the smoothed encoder pays for it with a
latent-path second order derivative $f'' \sim \lvert\Delta\mapsec\rvert / (2\lambda)$
that spikes rather than the bounded curvature seen off $\ambig$.

This is
what \cref{fig:dress_curvature_full} shows: the semantically supervised
encoders (CLIP, DINOv2) exhibit a sharp \textit{Dress} curvature peak inside
the perceptual flip zone near D65, separated from the control; DINOv1 shows
the same qualitative bump with a weaker margin; and MAE, whose
pixel-reconstruction target is least aligned with the chromatic prior mode,
gives the flattest response of the four. The curvature blow-up therefore
tracks how strongly an objective commits to the prior-mode section, exactly
as the theory anticipates.

\end{document}